\documentclass[12pt,reqno]{amsart}

\usepackage{a4wide}
\usepackage{amsmath,amssymb,amsfonts,amsthm}
\usepackage{enumerate}
\usepackage{mathtools}
\usepackage{graphicx}
\usepackage{caption}
\usepackage{url}

\usepackage[
  pdfauthor={Bernd C. Kellner},
  pdfkeywords={Integer-valued functions, iterations, matrix, determinant, 3x+1 problem},
  pdftitle={Sparse matrices describing iterations of integer-valued functions},
  linktocpage,colorlinks,bookmarksnumbered]{hyperref}

\hfuzz2pt
\vfuzz2pt

%%%%%%%%%%%%%%%%%%%%%%%%%%%%%%%%%%%%%%%%%%%%%%%%%%%%%%%%%%%%%%%%%%%%%%%%%%%%%%%%
% definitions
%%%%%%%%%%%%%%%%%%%%%%%%%%%%%%%%%%%%%%%%%%%%%%%%%%%%%%%%%%%%%%%%%%%%%%%%%%%%%%%%

% symbols

\newcommand{\ZZ}{\mathbb{Z}}                % set of integers
\newcommand{\NN}{\mathbb{N}}                % set of pos. integers
\newcommand{\PP}{\mathbb{P}}                % set of primes
\newcommand{\RD}{\mathbb{D}}                % restricted domain
\newcommand{\RDC}{\widetilde{\mathbb{D}}}   % set of classes
\newcommand{\Cyc}{\mathcal{C}}              % cycle
\newcommand{\Vect}[1]{\mathbf{#1}}          % vector
\newcommand{\MA}{M}                         % matrix M
\newcommand{\MI}{\widehat{M}}               % matrix I - M
\newcommand{\Set}[1]{\{#1\}}                % set { ... }

% math operators

\DeclareMathOperator*{\BigOCup}{\mathring{\bigcup}}

% environments

\newtheorem{Theorem}{Theorem}[section]
\newtheorem{Proposition}[Theorem]{Proposition}
\newtheorem{Lemma}[Theorem]{Lemma}
\newtheorem{Corollary}[Theorem]{Corollary}

\theoremstyle{remark}
\newtheorem{Remark}[Theorem]{Remark}
\newtheorem*{Remark*}{Remark}

% numbering

\numberwithin{equation}{section}
\numberwithin{figure}{section}

%%%%%%%%%%%%%%%%%%%%%%%%%%%%%%%%%%%%%%%%%%%%%%%%%%%%%%%%%%%%%%%%%%%%%%%%%%%%%%%%
% title
%%%%%%%%%%%%%%%%%%%%%%%%%%%%%%%%%%%%%%%%%%%%%%%%%%%%%%%%%%%%%%%%%%%%%%%%%%%%%%%%

\title{Sparse matrices describing iterations of integer-valued functions}
\author{Bernd C. Kellner}
\email{bk@bernoulli.org}

\subjclass[2010]{11B83 (Primary) 11C20 (Secondary)}
\keywords{Integer-valued functions, iterations, matrix, determinant, $3x+1$ problem}

%%%%%%%%%%%%%%%%%%%%%%%%%%%%%%%%%%%%%%%%%%%%%%%%%%%%%%%%%%%%%%%%%%%%%%%%%%%%%%%%
% document
%%%%%%%%%%%%%%%%%%%%%%%%%%%%%%%%%%%%%%%%%%%%%%%%%%%%%%%%%%%%%%%%%%%%%%%%%%%%%%%%

\begin{document}

\begin{abstract}
We consider iterations of integer-valued functions $\phi$, which have no fixed
points in the domain of positive integers. We define a local function $\phi_n$,
which is a sub-function of $\phi$ being restricted to the subdomain
$\{ 0, \ldots, n \}$. The iterations of $\phi_n$ can be described by a certain
$n \times n$ sparse matrix $M_n$ and its powers. The determinant of the related
$n \times n$ matrix $\widehat{M}_n = I - M_n$, where $I$ is the identity matrix,
acts as an indicator, whether the iterations of the local function $\phi_n$
enter a cycle or not. If $\phi_n$ has no cycle, then $\det \widehat{M}_n = 1$
and the structure of the inverse $\widehat{M}_n^{-1}$ can be characterized.
Subsequently, we give applications to compute the inverse $\widehat{M}_n^{-1}$
for some special functions. At the end, we discuss the results in connection
with the $3x+1$ and related problems.
\end{abstract}

\maketitle

%%%%%%%%%%%%%%%%%%%%%%%%%%%%%%%%%%%%%%%%%%%%%%%%%%%%%%%%%%%%%%%%%%%%%%%%%%%%%%%%
% section
%%%%%%%%%%%%%%%%%%%%%%%%%%%%%%%%%%%%%%%%%%%%%%%%%%%%%%%%%%%%%%%%%%%%%%%%%%%%%%%%

\section{Introduction}

Let $\ZZ$ and $\NN$ be the set of integers and positive integers, respectively.
Let $\NN_0 = \NN \cup \Set{ 0 }$. Let $n, m$ denote positive integers in this
paper. Define the finite domain $\RD_n = \Set{ 1, \ldots, n }$ and let
$\RD_{n,0} = \RD_n \cup \Set{ 0 }$.

We consider integer-valued functions on domains $S \subset \NN$, where
\[
  \phi_S : S \to \NN, \quad \phi(x) \neq x \quad (x \in S).
\]
We may define a function $\phi$ induced by $\phi_S$ by
\[
  \phi(x) = \begin{cases}
    \phi_S(x), & \text{if } x \in S, \\
    0,       & \text{else},
  \end{cases}
\]
which has the properties that
\begin{equation} \label{eq:phi-prop}
  \phi : \NN_0 \to \NN_0, \quad \phi(0) = 0,
    \quad \phi(x) \neq x \quad (x \in \NN),
\end{equation}
having no fixed points in $\NN$. Let $\Phi$ be the set of all
such functions satisfying \eqref{eq:phi-prop}.

Note that a function $\phi$ does not have to be analytic, i.e.\ being an
integer-valued polynomial. It can be arbitrarily defined, for example, using
piecewise functions or tables of any complexity.

We construct a local function $\phi_n$, that is
a sub-function of $\phi \in \Phi$ being restricted to the domain $\RD_{n,0}$, by
\[
  \phi_n : \RD_{n,0} \to \RD_{n,0}, \quad
  \phi_n(x) = \begin{cases}
    \phi(x), & \text{if } ( x, \phi(x) ) \in \RD^2_n, \\
    0,       & \text{else}.
  \end{cases}
\]

We denote $\phi^m = \phi \circ \cdots \circ \phi$ as an $m$-fold iteration of
$\phi$. We say that an iteration stops, if there exists an index $k$ such that
$\phi^k(x) = 0$, since all successive values of the iteration also vanish
by definition. We say that $\phi$ has a cycle of length $m \geq 2$,
if there exists $x \in \NN$ such that $\phi^m(x) = x$ and $\phi^{m'}(x) \neq x$
for $m' < m$. If $\phi$ has a cycle of length $m$ containing $x$, then we may
define the set
\begin{equation} \label{eq:cycle-def}
  \Cyc(\phi,m,x) = \Set{ \phi(x), \ldots, \phi^m(x) },
\end{equation}
describing all elements of this cycle. By definition this set has the properties
that
\begin{equation} \label{eq:cycle-prop}
  x \in \Cyc(\phi,m,x) \quad \text{and} \quad | \Cyc(\phi,m,x) | = m \geq 2.
\end{equation}
Regarding $\phi_n$, we require that $x \in \RD_n$ to define a cycle
$\Cyc(\phi_n,m,x)$ of length $m \geq 2$. Note that $\phi_n$ cannot have
a fixed point $x \in \RD_n$ by definition.

\begin{Lemma} \label{lem:cycle}
Let $\phi \in \Phi$. Assume that there exists a cycle $\Cyc(\phi,m,x)$. Then
\[
  \Cyc(\phi,m,x) = \Cyc(\phi_n,m,x)
    \quad \Longleftrightarrow \quad
    n \geq \max \, \Cyc(\phi,m,x).
\]
\end{Lemma}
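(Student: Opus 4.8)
The plan is to unwind the definitions on both sides. Recall that $\phi_n$ agrees with $\phi$ exactly on those $x \in \RD_n$ for which $\phi(x) \in \RD_n$ as well, and sends everything else to $0$. So the key observation is: if every element involved in the cycle — both the cycle elements themselves and their $\phi$-images — lies in $\RD_n$, then $\phi_n$ behaves identically to $\phi$ on that whole cycle, and conversely a discrepancy can only arise when some cycle element exceeds $n$.

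\begin{proof}
Write $C = \Cyc(\phi,m,x) = \Set{ \phi(x), \ldots, \phi^m(x) }$. By \eqref{eq:cycle-prop} we have $x \in C$ and $|C| = m \geq 2$. Note that since $C$ is a cycle, applying $\phi$ to any element of $C$ yields another element of $C$; in particular $\phi(C) = C$, so $C$ is closed under $\phi$ and every $\phi$-image of a point of $C$ already lies in $C$.

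($\Longleftarrow$) Assume $n \geq \max C$. Then $C \subseteq \RD_n$, and moreover for each $y \in C$ we have $\phi(y) \in C \subseteq \RD_n$, so $(y, \phi(y)) \in \RD_n^2$ and hence $\phi_n(y) = \phi(y)$ by the definition of the local function. Applying this repeatedly to $y = x$, we get $\phi_n^j(x) = \phi^j(x)$ for all $j \geq 1$ by induction on $j$ (the inductive step uses that $\phi_n^{j-1}(x) = \phi^{j-1}(x) \in C$, so $\phi_n^j(x) = \phi_n(\phi^{j-1}(x)) = \phi(\phi^{j-1}(x)) = \phi^j(x)$). In particular $\phi_n^m(x) = \phi^m(x) = x$ and $\phi_n^{m'}(x) = \phi^{m'}(x) \neq x$ for $m' < m$, so $x \in \RD_n$ defines a cycle $\Cyc(\phi_n,m,x)$ of length $m$, and $\Cyc(\phi_n,m,x) = \Set{ \phi_n(x), \ldots, \phi_n^m(x) } = \Set{ \phi(x), \ldots, \phi^m(x) } = C$.

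($\Longrightarrow$) Assume $\Cyc(\phi_n,m,x) = C$. In particular the right-hand side of the equivalence presupposes this set is a genuine cycle of $\phi_n$, so $C \subseteq \RD_{n,0}$; but $0 \notin C$ since $\phi$ has no fixed points in $\NN$ (if $0 \in C$ then $\phi(0) = 0 \in C$ would force $0$ to be a cycle element, yet $\phi^j(x) \in \NN$ throughout the cycle). Hence $C \subseteq \RD_n$, which gives $n \geq \max C$.
\end{proof}

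I expect the only delicate point to be the ($\Longrightarrow$) direction: one must argue carefully that the hypothesis "$\Cyc(\phi_n,m,x) = C$" already entails that $C$ sits inside $\RD_n$ (via the convention, stated just before the lemma, that $x \in \RD_n$ is required for $\Cyc(\phi_n,m,x)$ to be defined and that such a cycle lives in $\RD_{n,0}$), together with ruling out $0$ as a cycle element. The forward iteration bookkeeping in ($\Longleftarrow$) is routine once one notes $\phi(C) = C$.
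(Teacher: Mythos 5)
Your proof is correct and takes essentially the same approach as the paper: the forward direction verifies that all cycle elements and their images lie in $\RD_n$ so that $\phi_n$ reproduces the cycle of $\phi$, and the converse rests on the fact that values of $\phi_n$ lie in $\RD_{n,0}$ (the paper merely phrases this contrapositively, via an iterate $\phi^i(x) > n$ being sent to $0$). One small polish: your justification that $0 \notin \Cyc(\phi,m,x)$ reads slightly circular as written; the clean argument is that if $\phi^j(x) = 0$ for some $j \leq m$, then $\phi(0) = 0$ forces $\phi^m(x) = 0 \neq x$, contradicting $\phi^m(x) = x$.
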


\begin{proof}
Set $N = \max \, \Cyc(\phi,m,x)$. If $n \geq N$, then we have
\[
  (\phi(x),\phi^2(x)), \ldots, (\phi^{m-1}(x),\phi^m(x)) \in \RD^2_n,
\]
which also holds for $\phi_n$ having the same function values by definition.
Therefore $\phi_n$ has the same  cycle as $\phi$ in this case.

Conversely, if $n < N$, then there exists an index $i$, such that $\phi^i(x) > n$
showing that $\phi^i(x), \phi^i_n(x) \notin \RD_n$, where $\phi^i_n(x) = 0$.
Accordingly, $\Cyc(\phi,m,x) = \Cyc(\phi_n,m,x)$ implies that $n \geq N$ must
hold.
\end{proof}

\begin{Lemma} \label{lem:height}
Let $\phi \in \Phi$ and $n \geq 1$. Assume that $\phi_n$ has no cycle.
We define the height of $x \in \RD_n$ regarding $\phi_n$ by
\[
  h(x) = \min \, \Set{ k \in \NN : \phi_n^k( x ) = 0 }.
\]
We then have
\[
  1 \leq h(x) \leq n \quad (x \in \RD_n).
\]
\end{Lemma}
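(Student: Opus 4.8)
The plan is to study the forward orbit $x, \phi_n(x), \phi_n^2(x), \ldots$ inside the finite set $\RD_{n,0}$, which has $n+1$ elements. First I would check that $h(x)$ is well defined, i.e.\ that $\Set{ k \in \NN : \phi_n^k(x) = 0 }$ is nonempty. Since $\phi_n$ maps $\RD_{n,0}$ into itself, the orbit of $x$ must eventually repeat a value and hence is eventually periodic. If the orbit never met $0$, its periodic part would lie entirely in $\RD_n$; but by construction $\phi_n$ has no fixed point in $\RD_n$, and by hypothesis it has no cycle, so no nontrivial periodic orbit can sit inside $\RD_n$. Therefore the orbit reaches $0$, so $h(x)$ exists, and $h(x) \geq 1$ holds trivially since $1$ is the least element of $\NN$ and $x \neq 0$.

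For the upper bound, write $h = h(x)$ and consider the initial segment $x_0 = x$, $x_k = \phi_n^k(x)$ for $0 \leq k \leq h-1$. By minimality of $h$ each $x_k$ is nonzero, and since $\phi_n$ takes values in $\RD_{n,0}$ this forces $x_k \in \RD_n$ for all these $k$. I would then argue that $x_0, \ldots, x_{h-1}$ are pairwise distinct. Suppose $x_i = x_j$ with $0 \leq i < j \leq h-1$, and let $m$ be the least positive integer with $\phi_n^m(x_i) = x_i$ (which exists, as $j-i$ works). Since $\phi_n$ has no fixed point in $\RD_n$ and $x_i \in \RD_n$, we must have $m \geq 2$; but then $\Cyc(\phi_n, m, x_i)$ is a cycle of $\phi_n$ with $x_i \in \RD_n$, contradicting the hypothesis. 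Hence $x_0, \ldots, x_{h-1}$ are $h$ distinct elements of $\RD_n$, and since $|\RD_n| = n$ we obtain $h \leq n$.

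The argument is elementary; the only steps needing care are the well-definedness part (ruling out an orbit that loops forever without ever reaching $0$, which genuinely uses both "$\phi_n$ has no fixed point in $\RD_n$" and "$\phi_n$ has no cycle") and the passage from a first repetition in the orbit to a cycle of minimal length $m \geq 2$, so that the hypothesis applies in exactly the form stated. Neither presents a real obstacle.
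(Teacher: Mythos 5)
Your proof is correct and follows essentially the same route as the paper: a pigeonhole argument showing that the iterates of $x$ before the orbit reaches $0$ are pairwise distinct elements of $\RD_n$ (distinctness forced by the absence of fixed points and cycles), so there can be at most $n$ of them. The only difference is presentational — you treat well-definedness of $h(x)$ and the distinctness of iterates explicitly (the latter is essentially the paper's later Lemma on $\phi_n^k(x)\neq\phi_n^l(x)$), whereas the paper folds both into a single contradiction from assuming $\phi_n^n(x)\in\RD_n$.
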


\begin{proof}
Let $x \in \RD_n$. The property $h(x) \geq 1$ follows by definition. Next, we
show that $h(x) \leq n$. Assume to the contrary that $\phi_n^n(x) \in \RD_n$
implying that $h(x) > n$. Since $\phi_n$ has no cycle, we then obtain that
\[
  \mathcal{X} = \Set{ x, \phi_n(x), \ldots, \phi^n_n(x) } \subset \RD_n,
\]
where $\mathcal{X}$ must contain $n+1$ distinct elements. This gives a
contradiction to $|\RD_n| = n$.
\end{proof}

We define the $i$-th unit vector of size $n$ by $\Vect{e}_i$ and the zero
vector by $\Vect{e}_0$, which we shall use in an unambiguous context. Let $A, B,
E_n(\cdot,\cdot), I_n$ be $n \times n$ matrices. As usual, $I_n$ denotes the
identity matrix, where we use $I$ instead, if possible. Define the matrix
\[
  E_n(i,j) = \Vect{e}_i \Vect{e}_j^t,
\]
which only has the entry $1$ at row $i$ and column $j$ and zeros elsewhere. Let
$\# A$ denote the total number of nonzero entries of $A$. The term $\# A^k$
should be read as $\#(A^k)$. We define for $A$ and $B$ that
\[
  A \cap B = \sum_{\substack{1 \leq i,j \leq n\\A_{ij} B_{ij} \neq 0}} 1,
\]
counting all entries, where both matrices have nonzero entries in common.
We call $A$ and $B$ to be disjoint, if
\[
  A \cap B = 0
\]
implying that
\[
  \#( A + B ) = \# A + \# B.
\]

The main aim of the paper is to construct $n \times n$ matrices, which are
connected with the properties of a function $\phi \in \Phi$ as well as its local
function $\phi_n$. We define the following matrix by column vectors induced by
the local function $\phi_n$ by
\begin{equation} \label{eq:matrix-mn-def}
  \MA_n( \phi ) = \Bigg( \Vect{e}_{\phi_n(1)}, \ldots, \Vect{e}_{\phi_n(n)}
    \Bigg) \in \ZZ^{n \times n}
    \quad (\phi \in \Phi, n \geq 2),
\end{equation}
being a binary matrix with $\Set{ 0, 1 }$ entries. Further we define the related
matrix
\[
  \MI_n( \phi ) = I - M_n( \phi ) \in \ZZ^{n \times n}
    \quad (\phi \in \Phi, n \geq 2),
\]
which consists of entries with $\Set{ -1, 0, 1 }$. Since $\phi_n$ has no fixed
points in $\RD_n$, the diagonal of $\MA_n( \phi )$ and $\MI_n( \phi )$ has only
entries with $0$ and $1$, respectively. By construction the matrices
$\MA_n( \phi )$ and $\MI_n( \phi )$ are sparse matrices, since
$\# \MA_n( \phi ) \leq n$ and $\# \MI_n( \phi ) \leq 2n$, both being of order
$O(n)$.

The main property of $\MA_n( \phi )$ is that for $x \in \RD_n$ the mapping
\begin{align}
  x &\mapsto \phi_n(x) \nonumber \\
\intertext{coincides with}
  \MA_n( \phi ) \, \Vect{e}_x &= \Vect{e}_{\phi_n(x)}.
    \label{eq:matrix-vect}
\end{align}
If there exists a cycle of $\phi$, then $\det \MI_n( \phi )$ acts as an indicator
for this event. This is shown by the following theorems.

\begin{Theorem} \label{thm:matrix-cycle}
Let $\phi \in \Phi$ and $n \geq 2$.
If there exists a cycle $\Cyc(\phi_n,m,x)$, then
\[
  \det \MI_n( \phi ) = 0
\]
and $\MA_n( \phi )$ has an eigenvector $\Vect{v}$ with eigenvalue $1$ defined by
\[
  \Vect{v} = \sum_{y \in \Cyc(\phi_n,m,x)} \Vect{e}_y.
\]
\end{Theorem}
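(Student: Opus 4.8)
The plan is to directly verify that the vector $\Vect{v}$ is fixed by $\MA_n(\phi)$, and then deduce the determinant statement as an immediate corollary. Let me think about how the matrix acts on the unit vectors that appear in $\Vect{v}$.

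The key is the fundamental relation \eqref{eq:matrix-vect}: for any $x \in \RD_n$ we have $\MA_n(\phi)\,\Vect{e}_x = \Vect{e}_{\phi_n(x)}$. Now write $C = \Cyc(\phi_n,m,x)$ and $\Vect{v} = \sum_{y \in C} \Vect{e}_y$. Applying the matrix and using linearity gives $\MA_n(\phi)\,\Vect{v} = \sum_{y \in C} \Vect{e}_{\phi_n(y)}$. So the whole argument reduces to the claim that $\phi_n$ restricts to a bijection (in fact a cyclic permutation) of the set $C$ onto itself. That $\phi_n(C) \subseteq C$ is clear from the definition \eqref{eq:cycle-def}: if $y = \phi^i(x)$ then $\phi_n(y) = \phi_n(\phi^i(x))$; one needs that the cycle of $\phi_n$ coincides with consecutive iterates and that $\phi_n = \phi$ along the cycle, which is exactly the content of the displayed chain of pairs $(\phi(x),\phi^2(x)),\ldots$ being in $\RD_n^2$ — but here I must be slightly careful, since in this theorem the cycle is $\Cyc(\phi_n,m,x)$ of $\phi_n$ itself, not inherited from $\phi$. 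So more directly: by the cycle property \eqref{eq:cycle-prop} for $\phi_n$, the map $\phi_n$ sends $\phi_n^i(x) \mapsto \phi_n^{i+1}(x)$ for $i = 0,\ldots,m-1$ (indices mod $m$, using $\phi_n^m(x) = x$), so $\phi_n$ permutes the $m$ distinct elements of $C$ cyclically. Hence $\{\phi_n(y) : y \in C\} = C$, the sum $\sum_{y\in C}\Vect{e}_{\phi_n(y)}$ is just a reindexing of $\sum_{y\in C}\Vect{e}_y$, and therefore $\MA_n(\phi)\,\Vect{v} = \Vect{v}$. Since $C$ is nonempty, $\Vect{v} \neq \Vect{e}_0$, so $\Vect{v}$ is a genuine eigenvector with eigenvalue $1$.

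For the determinant, observe that $\MI_n(\phi)\,\Vect{v} = (I - \MA_n(\phi))\,\Vect{v} = \Vect{v} - \Vect{v} = \Vect{e}_0$, so $\MI_n(\phi)$ has a nontrivial kernel, whence $\det \MI_n(\phi) = 0$. Equivalently, $1$ is an eigenvalue of $\MA_n(\phi)$, so $\det(I - \MA_n(\phi)) = 0$.

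I do not anticipate a serious obstacle; the only point requiring care is justifying that $\phi_n$ acts as a cyclic permutation on $C$ — that is, that $\phi_n^i(x)$ for $i=1,\ldots,m$ are exactly the $m$ distinct elements of $C$ and that $\phi_n$ maps each to the next. This follows by unwinding the definitions of a cycle and of $\Cyc$ together with property \eqref{eq:cycle-prop}, and the fact that every element of $C$ lies in $\RD_n$ (so that \eqref{eq:matrix-vect} applies to it). Once that bijectivity is in hand, everything else is a one-line linear-algebra computation.
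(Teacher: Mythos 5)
Your proposal is correct and follows essentially the same route as the paper: apply the relation $\MA_n(\phi)\,\Vect{e}_y=\Vect{e}_{\phi_n(y)}$ to each element of the cycle, note that $\phi_n$ permutes $\Cyc(\phi_n,m,x)$ cyclically so the sum is merely reindexed, conclude $\MA_n(\phi)\,\Vect{v}=\Vect{v}$, and hence $\det\MI_n(\phi)=\det(I-\MA_n(\phi))=0$. Your extra care in justifying the cyclic-permutation step (and in noting $\Vect{v}\neq\Vect{e}_0$) only makes explicit what the paper cites via \eqref{eq:cycle-def} and \eqref{eq:cycle-prop}.
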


\begin{Corollary} \label{corl:matrix-cycle}
Let $\phi \in \Phi$. If $\phi$ has a cycle, then there exists an integer
$N \geq 2$ such that
\[
  \det \MI_n( \phi ) = 0 \quad (n \geq N).
\]
\end{Corollary}

\begin{proof}
By assumption $\phi$ has a cycle, say $\Cyc(\phi,m,x)$.
By Lemma~\ref{lem:cycle} we can find an integer $N \geq 2$ such that
\[
  \Cyc(\phi,m,x) = \Cyc(\phi_n,m,x) \quad (n \geq N).
\]
Applying Theorem~\ref{thm:matrix-cycle} for $n \geq N$ gives the result.
\end{proof}

\begin{Theorem} \label{thm:matrix-inverse}
Let $\phi \in \Phi$ and $n \geq 2$. If $\phi_n$ has no cycle, then we have the
following statements:
\begin{enumerate}
\item The matrix $\MA_n( \phi )$ is nilpotent of degree at most $n$.
\item The powers of $\MA_n( \phi )$ are binary matrices satisfying
\[
  \# \MA_n( \phi )^k \leq n-k \quad (1 \leq k \leq n).
\]
They are disjoint for different exponents that
\[
  \MA_n( \phi )^k \cap \MA_n( \phi )^l = 0 \quad (k \neq l, \, k,l \geq 1).
\]
\item The related matrix $\MI_n( \phi )$ is invertible, where
\[
  \det \MI_n( \phi ) = 1.
\]
\item The inverse $\MI_n( \phi )^{-1}$ is a binary matrix with the properties
that
\[
  \MI_n( \phi )^{-1} = I + \MA_n( \phi ) + \cdots + \MA_n( \phi )^{n-1}
\]
and
\[
  \# \MI_n( \phi )^{-1} = n + \sum_{k=1}^{n-1} \# \MA_n( \phi )^k
    \leq \binom{n+1}{2}.
\]
\end{enumerate}
\end{Theorem}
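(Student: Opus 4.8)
The plan is to push the basic identity \eqref{eq:matrix-vect} through iterations and then combine the resulting column description with the height bound of Lemma~\ref{lem:height} and standard facts about nilpotent matrices. Reading $\Vect{e}_0$ as the zero vector and using $\phi(0)=0$, the relation \eqref{eq:matrix-vect} extends to $\MA_n(\phi)^k \Vect{e}_x = \Vect{e}_{\phi_n^k(x)}$ for all $x \in \RD_{n,0}$ and $k \geq 0$. Hence the $x$-th column of $\MA_n(\phi)^k$ is the unit vector $\Vect{e}_{\phi_n^k(x)}$ when $\phi_n^k(x) \neq 0$ and is the zero column otherwise; so every power $\MA_n(\phi)^k$ is binary, its only possibly nonzero entry in column $x$ sits in row $\phi_n^k(x)$, and $\# \MA_n(\phi)^k = |\Set{ x \in \RD_n : h(x) > k }|$. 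Since Lemma~\ref{lem:height} gives $h(x) \leq n$, we get $\phi_n^n(x) = 0$ for every $x \in \RD_n$, i.e.\ $\MA_n(\phi)^n = 0$, which proves part~(1) and the binarity assertion of part~(2).

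For the bound $\# \MA_n(\phi)^k \leq n-k$, I would put $M = \max_{x \in \RD_n} h(x)$, so $M \leq n$, and note that $h$ attains every value in $\Set{ 1, \ldots, M }$: starting from a point $x_0$ with $h(x_0)=M$ and descending, the element $\phi_n^{M-j}(x_0)$ has height exactly $j$. Therefore $|\Set{ x \in \RD_n : h(x) \leq k }| \geq k$ for $1 \leq k \leq M$, and consequently $\# \MA_n(\phi)^k = n - |\Set{ x \in \RD_n : h(x) \leq k }| \leq n-k$; when $M < k \leq n$ the power is already zero and $0 \leq n-k$ holds trivially. Disjointness of distinct powers is then a short position count: if $\MA_n(\phi)^k$ and $\MA_n(\phi)^l$ with $1 \leq k < l$ had a common nonzero entry, it would lie in a column $x$ with $\phi_n^k(x) = \phi_n^l(x) \neq 0$, so $y = \phi_n^k(x) \in \RD_n$ would be a periodic point of $\phi_n$; but $\phi_n$ has no fixed point in $\RD_n$ and no cycle, a contradiction. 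The same argument shows that no $\MA_n(\phi)^k$ with $k \geq 1$ has a nonzero diagonal entry, hence $I$ is disjoint from all these powers as well.

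Part~(3) follows from part~(1): the characteristic polynomial of the nilpotent matrix $\MA_n(\phi)$ is $t^n$, and evaluating it at $t=1$ gives $\det \MI_n(\phi) = \det(I - \MA_n(\phi)) = 1 \neq 0$, so $\MI_n(\phi)$ is invertible. For part~(4), nilpotency yields the telescoping identity $(I - \MA_n(\phi))\bigl(I + \MA_n(\phi) + \cdots + \MA_n(\phi)^{n-1}\bigr) = I - \MA_n(\phi)^n = I$, whence $\MI_n(\phi)^{-1} = I + \MA_n(\phi) + \cdots + \MA_n(\phi)^{n-1}$. By part~(2) and the diagonal remark above, the matrices $I, \MA_n(\phi), \ldots, \MA_n(\phi)^{n-1}$ are pairwise disjoint binary matrices, so their sum is again binary and $\# \MI_n(\phi)^{-1} = n + \sum_{k=1}^{n-1} \# \MA_n(\phi)^k \leq n + \sum_{k=1}^{n-1}(n-k) = n + \binom{n}{2} = \binom{n+1}{2}$.

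I expect the one genuinely substantive step to be the inequality $\# \MA_n(\phi)^k \leq n-k$: it is where the acyclicity of $\phi_n$ is used in a non-formal way, via the observation that every intermediate height in $\Set{ 1, \ldots, \max_x h(x) }$ is attained, and the regime $k > \max_x h(x)$ must be handled separately. The remaining steps are bookkeeping with \eqref{eq:matrix-vect}, the height bound of Lemma~\ref{lem:height}, and the standard behaviour of nilpotent matrices under $\det(I-\cdot)$ and the Neumann series; the only subtlety worth flagging is that the conventions that $\Vect{e}_0$ is the zero vector and $\phi(0)=0$ are exactly what keep the formula $\MA_n(\phi)^k \Vect{e}_x = \Vect{e}_{\phi_n^k(x)}$ valid once an orbit has reached $0$.
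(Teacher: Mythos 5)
Your proof is correct, and while its overall skeleton matches the paper's (column description of the powers, disjointness from acyclicity, Neumann series plus $\det(I-A)=1$ for nilpotent $A$), the decisive counting step is handled by a genuinely different and more elementary argument. The paper obtains $\# \MA_n(\phi)^k \le n-k$ through a chain of structural results: the orbit decomposition and equivalence relation of Lemma~\ref{lem:orbit-decomp}, the labeled trees of Lemma~\ref{lem:tree}, and the ``no internal gaps'' property of the height-count sequence in Proposition~\ref{prop:jnk-sets}, combined with the power formula of Proposition~\ref{prop:matrix-power} (proved via products of the matrices $E_n(i,j)$) and the disjointness statement of Proposition~\ref{prop:matrix-disjoint}. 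You reach the same inequality by one direct observation: walking down the orbit of an element of maximal height $M=\max_x h(x)$ shows every height in $\Set{1,\ldots,M}$ is attained, hence $|\Set{x \in \RD_n : h(x)\le k}| \ge k$, and the regime $k>M$ is trivial since the power vanishes; likewise your induction extending \eqref{eq:matrix-vect} to $\MA_n(\phi)^k \Vect{e}_x = \Vect{e}_{\phi_n^k(x)}$ (with $\Vect{e}_0=0$) replaces Proposition~\ref{prop:matrix-power}, and your appeal to the characteristic polynomial $t^n$ replaces the paper's triangularization in Lemma~\ref{lem:matrix-nilpotent} --- all equivalent but leaner. What the paper's heavier machinery buys is not this theorem but the sharper Theorem~\ref{thm:matrix-part}: the exact values $\# \MA_n(\phi)^k = n-\sum_{\nu\le k}p_\nu$, the exact nilpotency degree $m$, and the explicit column structure of $\MI_n(\phi)^{-1}$ all come from the tree/partition apparatus, which your shortcut does not provide (though your identity $\# \MA_n(\phi)^k = |\Set{x : h(x)>k}|$ is in fact the seed of that refinement). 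Your handling of the subtle points --- the convention $\Vect{e}_0=0$ keeping the column formula valid after an orbit hits $0$, the separate treatment of $k>M$, and the disjointness of $I$ from all positive powers --- is sound.
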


\begin{Remark}
The bounds of Theorem~\ref{thm:matrix-inverse} are sharp. If we consider the
function $\phi \in \Phi$ being induced by $f(x) = x+1$, then $\MA_n( \phi )$ is
nilpotent of degree $n$, $\# \MA_n( \phi )^k = n-k$ for $1 \leq k \leq n$, and
consequently $\# \MI_n( \phi )^{-1} = \binom{n+1}{2}$. This will be shown by
Proposition~\ref{prop:phi-x+1} as an example. The next theorem shows that one
can compute exact values for an arbitrary local function $\phi_n$, in case it
has no cycle.
\end{Remark}

\begin{Theorem} \label{thm:matrix-part}
Let $\phi \in \Phi$ and $n \geq 2$. Assume that $\phi_n$ has no cycle. Let
\[
  p_\nu = | \Set{ x \in \RD_n : h(x) = \nu } |
    \quad (1 \leq \nu \leq n)
\]
and
\[
  m = \max_{1 \leq \nu \leq n} \, \Set{ \nu : p_\nu \geq 1 }.
\]
Define
\[
  \pi = (p_1, \ldots, p_m).
\]
Then $\pi$ is an ordered partition of $n$ and length $m$ with $1 \leq m \leq n$,
where $p_1, \ldots, p_m \geq 1$. We have the following statements:
\begin{enumerate}
\item The matrix $\MA_n( \phi )$ is nilpotent of degree $m$.
\item We have
\[
  \MA_n( \phi )^k = \sum_{( \phi^k_n(x), x ) \in \RD_n^2} E_n( \phi^k_n(x), x )
    \quad(1 \leq k \leq m),
\]
where
\[
  \# \MA_n( \phi )^k = n - \sum_{\nu = 1}^k p_\nu
    \quad (1 \leq k \leq m).
\]
\item We have
\[
  \MI_n( \phi )^{-1} = \Bigg( \Vect{v}_1, \ldots, \Vect{v}_n \Bigg),
\]
where the column vectors satisfy that
\[
  \Vect{v}_j = \Vect{e}_j + \sum_{\nu = 1}^{h(j)-1} \Vect{e}_{\phi_n^\nu(j)}
    \quad (1 \leq j \leq n).
\]
Moreover,
\[
  \# \MI_n( \phi )^{-1}
    = \sum_{x \in \RD_n} h(x)
    = \sum_{\nu=1}^m \nu p_\nu \leq n m - \binom{m}{2}.
\]
\end{enumerate}
\end{Theorem}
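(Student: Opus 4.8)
The plan is to derive all four claims from a single iteration identity extending \eqref{eq:matrix-vect}, combined with the structural facts already proved in Theorem~\ref{thm:matrix-inverse}. I begin with the partition claim. By Lemma~\ref{lem:height}, the hypothesis that $\phi_n$ has no cycle gives $1 \leq h(x) \leq n$ for every $x \in \RD_n$, so the sets $\Set{ x \in \RD_n : h(x) = \nu }$ with $1 \leq \nu \leq n$ partition $\RD_n$; hence $\sum_\nu p_\nu = n$, and $1 \leq m \leq n$ since $\RD_n \neq \emptyset$. To rule out gaps (i.e.\ to show $p_\nu \geq 1$ for all $1 \leq \nu \leq m$), I observe that if $h(x) = \nu \geq 2$ then $\phi_n(x) \neq 0$, so $\phi_n(x) \in \RD_n$ with $h(\phi_n(x)) = \nu - 1$; iterating, an element of height $m$ forces an element of every height $1, \ldots, m-1$. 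Thus $\pi = (p_1,\ldots,p_m)$ is an ordered partition of $n$ into $m$ positive parts.

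Next I extend \eqref{eq:matrix-vect} by induction on $k$: using $\MA_n(\phi)\,\Vect{e}_0 = \Vect{e}_0$ together with $\MA_n(\phi)\,\Vect{e}_x = \Vect{e}_{\phi_n(x)}$, one obtains $\MA_n(\phi)^k\,\Vect{e}_x = \Vect{e}_{\phi_n^k(x)}$ for all $x \in \RD_{n,0}$ and $k \geq 0$ (the step splits according to whether $\phi_n^{k-1}(x)$ vanishes). Reading this columnwise yields $\MA_n(\phi)^k = \sum_{x=1}^n \Vect{e}_{\phi_n^k(x)}\Vect{e}_x^t = \sum_{(\phi_n^k(x),x) \in \RD_n^2} E_n(\phi_n^k(x),x)$, whose nonzero columns are exactly those $x$ with $\phi_n^k(x) \neq 0$, that is, with $h(x) > k$. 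Since these powers are binary (Theorem~\ref{thm:matrix-inverse}(2)) and distinct nonzero columns do not overlap, $\# \MA_n(\phi)^k$ equals $|\Set{ x \in \RD_n : h(x) \geq k+1 }| = \sum_{\nu > k} p_\nu = n - \sum_{\nu=1}^k p_\nu$, proving (2). For (1), setting $k = m$ gives $\# \MA_n(\phi)^m = n - \sum_{\nu=1}^m p_\nu = 0$, so $\MA_n(\phi)^m = 0$, while $\# \MA_n(\phi)^{m-1} = p_m \geq 1$ forces $\MA_n(\phi)^{m-1} \neq 0$; hence the nilpotency degree is exactly $m$.

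For (3), Theorem~\ref{thm:matrix-inverse}(4) gives $\MI_n(\phi)^{-1} = \sum_{k=0}^{n-1} \MA_n(\phi)^k$, which by (1) truncates to $\sum_{k=0}^{m-1} \MA_n(\phi)^k$. Applying this to $\Vect{e}_j$ and invoking the iteration identity, the $j$-th column is $\Vect{v}_j = \sum_{k=0}^{m-1}\Vect{e}_{\phi_n^k(j)} = \Vect{e}_j + \sum_{\nu=1}^{h(j)-1}\Vect{e}_{\phi_n^\nu(j)}$, since $\phi_n^k(j) = 0$ contributes $\Vect{e}_0$ for $k \geq h(j)$. Because $\phi_n$ has no cycle, the elements $j, \phi_n(j), \ldots, \phi_n^{h(j)-1}(j)$ are pairwise distinct (the argument of Lemma~\ref{lem:height}), so $\Vect{v}_j$ has exactly $h(j)$ nonzero entries, and summing over the $n$ distinct columns gives $\# \MI_n(\phi)^{-1} = \sum_{x \in \RD_n} h(x) = \sum_{\nu=1}^m \nu p_\nu$. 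For the final bound, write $p_\nu = 1 + q_\nu$ with $q_\nu \geq 0$ and $\sum_{\nu=1}^m q_\nu = n - m$; then $\sum_{\nu=1}^m \nu p_\nu = \binom{m+1}{2} + \sum_{\nu=1}^m \nu q_\nu \leq \binom{m+1}{2} + m(n-m) = nm - \binom{m}{2}$, the inequality being $\sum_\nu \nu q_\nu \leq m \sum_\nu q_\nu$.

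I do not anticipate a serious obstacle. The only steps requiring genuine argument rather than bookkeeping are the "no gaps" property of $\pi$ and the distinctness of the orbit $j, \phi_n(j), \ldots, \phi_n^{h(j)-1}(j)$, and both follow from acyclicity exactly as in the proof of Lemma~\ref{lem:height}; the remainder is the columnwise reading of the iteration identity together with Theorem~\ref{thm:matrix-inverse}.
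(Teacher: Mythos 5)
Your proof is correct, but it reaches the conclusions by a noticeably different route than the paper. The paper derives the partition structure and the count $\#\MA_n(\phi)^k = n - \sum_{\nu\le k} p_\nu$ from its orbit/tree machinery: the equivalence classes of Lemma~\ref{lem:orbit-decomp}, the labeled trees of Lemma~\ref{lem:tree}, and the resulting Propositions~\ref{prop:jnk-sets} and \ref{prop:jnk-part} (where your $p_\nu$ appears as the sequence $s_\nu$, and the ``no gaps'' property comes from connectivity of the trees); the matrix power formula is Proposition~\ref{prop:matrix-power}, and the final inequality is Lemma~\ref{lem:part-estim}, proved by induction on $n$. You instead work directly with heights: the observation $h(\phi_n(x)) = h(x)-1$ for $h(x)\ge 2$ replaces the tree argument for ``no gaps''; the columnwise identity $\MA_n(\phi)^k\,\Vect{e}_x = \Vect{e}_{\phi_n^k(x)}$ re-proves Proposition~\ref{prop:matrix-power} and immediately gives $\#\MA_n(\phi)^k = |\{x : h(x) > k\}|$, bypassing the counting through $J_{n,k}$ and the trees; and your substitution $p_\nu = 1 + q_\nu$ with $\sum_\nu \nu q_\nu \le m\sum_\nu q_\nu$ replaces the inductive Lemma~\ref{lem:part-estim} by a one-line estimate. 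Your version is shorter and self-contained for this theorem (you only import Theorem~\ref{thm:matrix-inverse} and Lemma~\ref{lem:height}), and you are in fact more careful than the paper on one point, verifying $\MA_n(\phi)^{m-1} \ne 0$ so that the nilpotency degree is exactly $m$; what the paper's heavier machinery buys is structural information (the forest decomposition of $\RD_n$) that it also needs elsewhere, notably for the bound $|J_{n,k}(\phi)| \le n-k$ behind Theorem~\ref{thm:matrix-inverse}. Two cosmetic remarks: the pairwise distinctness of $j, \phi_n(j), \ldots, \phi_n^{h(j)-1}(j)$ is Lemma~\ref{lem:phi-k-neq-l} rather than Lemma~\ref{lem:height}, and your appeal to Theorem~\ref{thm:matrix-inverse}(2) for binariness is redundant, since each column of $\MA_n(\phi)^k$ is already a unit vector or zero by your own identity.
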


An ordered partition of an integer, where the order of its summands is relevant,
is also called a composition (cf.~\cite[Chap.~II, p.~123]{Comtet:1974}).
We will prove the theorems above in the following sections. We will give some
applications of the theorems in Sections~\ref{sec:simple-patterns} and
\ref{sec:3x+1-problem}. See the figures therein for illustrations of examples of
the matrices $\MI_n( \phi )$ and $\MI_n( \phi )^{-1}$.

%%%%%%%%%%%%%%%%%%%%%%%%%%%%%%%%%%%%%%%%%%%%%%%%%%%%%%%%%%%%%%%%%%%%%%%%%%%%%%%%
% section
%%%%%%%%%%%%%%%%%%%%%%%%%%%%%%%%%%%%%%%%%%%%%%%%%%%%%%%%%%%%%%%%%%%%%%%%%%%%%%%%

\section{Iterations}

Recall that $\phi_n: \RD_{n,0} \to \RD_{n,0}$ can be any \textsl{exotic}
function, i.e.\ $\phi_n$ can be composed of piecewise functions of any
complexity. This situation will be reflected in the following lemmas and
propositions in this section. First, we define an orbit of an element
$x \in \RD_n$, collecting the iterations $\phi_n^k(x)$. Regarding these orbits
we can define an equivalence relation on $\RD_n$, which leads to a disjoint
decomposition of $\RD_n$. Second, these disjoint sets in question can be
interpreted as labeled trees, whose properties establish the results. For basic
graph theory see \cite[Chap.~I.17, pp.~60]{Comtet:1974}.

\begin{Lemma} \label{lem:phi-k-neq-l}
Let $\phi \in \Phi$ and $k,l,n \in \NN$, where $k \neq l$.
If $\phi$ has no cycle, then
\[
  \phi^k(x) \neq \phi^l(x) \quad (x, \phi^k(x), \phi^l(x) \in \NN).
\]
Accordingly, if $\phi_n$ has no cycle, then
\[
  \phi_n^k(x) \neq \phi_n^l(x) \quad (x, \phi_n^k(x), \phi_n^l(x) \in \RD_n).
\]
\end{Lemma}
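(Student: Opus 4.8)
The plan is to argue by contradiction using the absence of cycles. The key point is that a repeated value along an orbit would force a periodic point, and since $\phi$ has no fixed points in $\NN$, the minimal period of that point is at least $2$, i.e.\ it generates a genuine cycle.

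First I would assume, without loss of generality by symmetry in $k,l$, that $k < l$, and suppose for contradiction that $\phi^k(x) = \phi^l(x)$. Set $y = \phi^k(x)$ and $d = l - k \geq 1$, so that $\phi^d(y) = y$. Before invoking the cycle definition I would record the observation that $\phi^j(x) \in \NN$ holds for \emph{all} $0 \leq j \leq l$: indeed, if $\phi^j(x) = 0$ for some such $j$, then $\phi(0) = 0$ would give $\phi^l(x) = 0 \notin \NN$, contradicting the hypothesis $\phi^l(x) \in \NN$. In particular $y, \phi(y), \ldots, \phi^d(y)$ all lie in $\NN$, so the periodicity relation $\phi^d(y) = y$ really takes place inside $\NN$.

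Next I would let $m = \min\{\, j \in \NN : \phi^j(y) = y \,\}$, which is well defined because $d$ belongs to this set, and note $m \leq d$. Since $y \in \NN$ and $\phi$ has no fixed point in $\NN$ by \eqref{eq:phi-prop}, we cannot have $m = 1$, hence $m \geq 2$. By minimality $\phi^{m'}(y) \neq y$ for $1 \leq m' < m$, so $\phi$ has a cycle of length $m \geq 2$ containing $y$, namely $\Cyc(\phi, m, y)$ as in \eqref{eq:cycle-def}, with the properties \eqref{eq:cycle-prop}. This contradicts the hypothesis that $\phi$ has no cycle, establishing the first claim.

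For the ``accordingly'' part I would simply rerun the same argument with $\phi$ replaced by $\phi_n$ and $\NN$ by $\RD_n$, using that $\phi_n(0) = 0$, that $\phi_n$ has no fixed point in $\RD_n$, and that $\phi_n^l(x) \in \RD_n$ forces $\phi_n^j(x) \in \RD_n$ for all $0 \leq j \leq l$. The main obstacle — really the only thing needing care — is precisely this last bookkeeping step: one must ensure that the repeated value and all iterates between its two occurrences stay out of the absorbing state $0$, so that what is produced is a bona fide cycle in the sense of \eqref{eq:cycle-def}--\eqref{eq:cycle-prop} and not an artifact of the padding by zeros.
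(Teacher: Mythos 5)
Your proposal is correct and follows essentially the same route as the paper: assume $\phi^k(x) = \phi^l(x)$, set $y$ equal to the earlier iterate, deduce $\phi^{l-k}(y) = y$, and contradict the absence of fixed points and cycles (and likewise for $\phi_n$). Your extra bookkeeping — checking that no intermediate iterate hits $0$ and passing to the minimal period $m \geq 2$ — simply makes explicit what the paper's terser argument leaves implicit, and it is all accurate.
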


\begin{proof}
By symmetry we may assume that $k > l$. Set $y = \phi^l(x)$. Assume to the
contrary that we have $\phi^k(x) = \phi^l(x)$. This implies that $\phi^{k-l}(y)
= y$, contradicting that $\phi$ has no cycle or no fixed point. Similarly, if
$x, \phi_n^k(x), \phi_n^l(x) \in \RD_n$, then $\phi_n^k(x) = \phi_n^l(x)$ also
would imply, that $\phi_n$ has a cycle or a fixed point.
\end{proof}

\begin{Lemma} \label{lem:orbit-decomp}
Let $\phi \in \Phi$ and $n \geq 1$. Assume that $\phi_n$ has no cycle.
Define the orbit of $x \in \RD_n$ by
\[
  \Omega(x) = \Set{ \phi_n^k( x ) : 0 \leq k < h(x) } \subset \RD_n,
\]
where $\phi_n^0( x ) = x$ is defined to be the identity function on $\RD_n$ and
$h(x)$ is the height of $x$. Then
\[
  1 \leq | \Omega(x) | = h(x) \leq n \quad (x \in \RD_n).
\]
There exists an equivalence relation on $\RD_n$ induced by $\phi_n$, such that
\begin{equation} \label{eq:equiv-x-omega}
  x \sim y \quad \Longleftrightarrow \quad
    \Omega(x) \cap \Omega(y) \neq \emptyset
    \quad (x, y \in \RD_n).
\end{equation}
Let denote $[x]$ the equivalence class of $x$ and
\[
  \RDC_n = \RD_n /\!\! \sim
\]
the set of these classes. Define the set
\begin{equation} \label{eq:cover-orbit}
  \Omega_{[x]} = \bigcup_{x' \in [x]} \Omega( x' ),
\end{equation}
which covers all orbits of $x' \in [x]$. Then
\begin{equation} \label{eq:disjoint-decomp}
  \RD_n = \BigOCup_{[x] \in \RDC_n} \Omega_{[x]}
\end{equation}
gives a disjoint decomposition.
\end{Lemma}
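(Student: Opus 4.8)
The plan is to establish the three assertions in turn; everything is formal except the transitivity of the relation~\eqref{eq:equiv-x-omega}, which is where the work lies.

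For the size of the orbit, observe that by the definition of the height the iterates $\phi_n^0(x), \phi_n^1(x), \ldots, \phi_n^{h(x)-1}(x)$ all lie in $\RD_n$, and since $\phi_n$ has no cycle, Lemma~\ref{lem:phi-k-neq-l} shows they are pairwise distinct. Hence $|\Omega(x)| = h(x)$, and the bounds $1 \leq h(x) \leq n$ are exactly Lemma~\ref{lem:height}.

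To see that $\sim$ is an equivalence relation I would work with the \emph{terminal element} $\tau(x) = \phi_n^{h(x)-1}(x) \in \RD_n$, i.e.\ the last nonzero iterate of $x$ before the orbit stops. Reflexivity holds since $\phi_n^0(x) = x \in \Omega(x)$, and symmetry is built into~\eqref{eq:equiv-x-omega}. The key point is the identity
\[
  \Omega(x) \cap \Omega(y) \neq \emptyset
    \quad \Longleftrightarrow \quad
    \tau(x) = \tau(y),
\]
from which transitivity is immediate, since then $\sim$ is the kernel of the map $x \mapsto \tau(x)$. The direction ``$\Leftarrow$'' is clear because $\tau(x) = \tau(y)$ lies in both orbits. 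For ``$\Rightarrow$'', suppose $\phi_n^i(x) = \phi_n^j(y)$ with $0 \leq i < h(x)$ and $0 \leq j < h(y)$; then $\phi_n^{i+t}(x) = \phi_n^{j+t}(y)$ for every $t \geq 0$. Choosing $t \geq 0$ with $i+t = h(x)-1$, we get $\phi_n^{j+t}(y) = \tau(x) \neq 0$ while $\phi_n^{j+t+1}(y) = \phi_n^{h(x)}(x) = 0$, which forces $j+t = h(y)-1$ and hence $\tau(y) = \tau(x)$. This index bookkeeping is the one delicate step, and I expect it to be the main obstacle.

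Finally, for the disjoint decomposition~\eqref{eq:disjoint-decomp}: each $\Omega_{[x]}$ is a subset of $\RD_n$ by~\eqref{eq:cover-orbit}, and $x \in \Omega(x) \subseteq \Omega_{[x]}$, so the family $\{ \Omega_{[x]} : [x] \in \RDC_n \}$ covers $\RD_n$. If $w \in \Omega_{[x]} \cap \Omega_{[y]}$, there are $x' \in [x]$ and $y' \in [y]$ with $w \in \Omega(x') \cap \Omega(y')$, so $x' \sim y'$ and therefore $[x] = [y]$; hence distinct classes contribute disjoint sets. (The same reasoning even gives $\Omega_{[x]} = [x]$, so~\eqref{eq:disjoint-decomp} is just the partition of $\RD_n$ into $\sim$-classes.)
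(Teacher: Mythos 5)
Your proposal is correct and follows essentially the same route as the paper: both reduce the relation $\sim$ to equality of the terminal element $\phi_n^{h(x)-1}(x)$ of the orbit (the unique element of height $1$), with your explicit index bookkeeping simply spelling out what the paper phrases as the unique ordering of orbit elements by height. The cardinality bound and the disjointness of the sets $\Omega_{[x]}$ are argued exactly as in the paper.
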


\begin{proof}
By Lemma~\ref{lem:height} we have the bounds
\[
  1 \leq h(x) \leq n \quad (x \in \RD_n).
\]
Lemma~\ref{lem:phi-k-neq-l} implies that
\[
  1 \leq | \Omega(x) | = h(x) \leq n \quad (x \in \RD_n).
\]
It is easy to see that
\begin{equation} \label{eq:equiv-x-height}
  x \sim y \quad \Longleftrightarrow \quad
   \phi_n^{h(x)-1}( x ) = \phi_n^{h(y)-1}( y )
   \quad (x, y \in \RD_n)
\end{equation}
defines an equivalence relation on $\RD_n$. We show that this relation transfers
to \eqref{eq:equiv-x-omega} as follows. If there exists
\[
  z \in \Omega(x) \cap \Omega(y) \neq \emptyset,
\]
then we have
\[
  \Omega(z) = \Set{ z, \phi_n(z), \ldots, \phi_n^{h(z)-1}( z )}
    \subset \Omega(x) \cap \Omega(y).
\]
By construction of $\Omega(\cdot)$ and Lemma~\ref{lem:phi-k-neq-l}, the
elements of an orbit can be uniquely ordered by height. Thus, there exists only
one element of height $1$ in $\Omega(\cdot)$. We then infer that
\[
  \phi_n^{h(z)-1}( z ) = \phi_n^{h(x)-1}( x ) = \phi_n^{h(y)-1}( y ),
\]
showing that $x \sim y$. Conversely, if $x \sim y$, then
\[
  \phi_n^{h(x)-1}( x ), \phi_n^{h(y)-1}( y ) \in \Omega(x) \cap \Omega(y)
    \neq \emptyset,
\]
using the same arguments as above. After all, this shows \eqref{eq:equiv-x-omega}.
The construction of $\Omega_{[x]}$ in \eqref{eq:cover-orbit} implies that
\[
  \Omega_{[x]} \cap \Omega_{[y]} = \emptyset,
    \quad \text{if\ } [x] \neq [y],
\]
applying \eqref{eq:equiv-x-omega}. This finally establishes the disjoint
decomposition in \eqref{eq:disjoint-decomp}.
\end{proof}

\begin{Lemma} \label{lem:tree}
Let $\phi \in \Phi$ and $n \geq 1$. Assume that $\phi_n$ has no cycle.
Define
\[
  \hat{\phi}_n( (x, k) ) = ( \phi_n(x), k-1 ) \quad (x \in \RD_n, k \geq 1).
\]
Let $[x] \in \RDC_n$. Then there exists a labeled tree
\[
  T_{[x]} \cong  \Omega_{[x]},
\]
which can be uniquely defined by its labeled nodes by
\begin{equation} \label{eq:tree-def}
  T_{[x]} = \Set{ (y,h(y)) :  y \in \Omega_{[x]} },
\end{equation}
preserving the structure induced by $\hat{\phi}_n$. The tree $T_{[x]}$ has the
following properties:
\begin{enumerate}
\item There exists a unique root node $(e,1) \in T_{[x]}$,
      where $\hat{\phi}_n( (e,1) ) = (0, 0)$.
\item If a node $(d,k) \in T_{[x]}$ $(k \geq 1)$ has $m$ child nodes, then
      they are given by \newline
      $\Set{(c_i,k+1)}_{1 \leq i \leq m}$ satisfying
      $\hat{\phi}_n( (c_i,k+1) ) = (d,k) \quad (1 \leq i \leq m)$.
\end{enumerate}
\end{Lemma}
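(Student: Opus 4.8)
\emph{Overview and Step 1.} The plan is to realize the set $\Omega_{[x]}$, equipped with the action of $\phi_n$, as a concrete rooted graph on the labeled point set of \eqref{eq:tree-def}, to verify directly that this graph is a tree, and then to read off properties~(1) and~(2). The one ingredient that drives everything is the behaviour of the height along an orbit. For $y \in \RD_n$ with $h(y) = k \geq 2$ the definition of $h$ gives $\phi_n^{k}(y) = 0$ and $\phi_n^{k-1}(y) \neq 0$; hence $\phi_n^{k-1}(\phi_n(y)) = 0$ while $\phi_n^{k-2}(\phi_n(y)) = \phi_n^{k-1}(y) \neq 0$, so $\phi_n(y) \in \RD_n$ and
\[
  h(\phi_n(y)) = h(y) - 1 \qquad (h(y) \geq 2),
\]
whereas $h(y) = 1$ means $\phi_n(y) = 0$. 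Thus $\hat{\phi}_n$ maps $(y,h(y))$ to $(\phi_n(y),h(y)-1)$ if $h(y)\geq 2$ and to $(0,0)$ if $h(y)=1$; moreover, since $\phi_n^j(y) \in \Omega(y) \subseteq \Omega_{[x]}$ for $0 \leq j < h(y)$ by \eqref{eq:cover-orbit}, the map $\hat{\phi}_n$ sends every node of $T_{[x]}$ other than the height-$1$ node back into $T_{[x]}$.

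\emph{Step 2: the root, and $T_{[x]}$ is a tree.} Every $y \in \Omega_{[x]}$ satisfies $y \sim x$: it lies in some $\Omega(x')$ with $x' \in [x]$, so $\Omega(y) \cap \Omega(x') \neq \emptyset$ and $y \sim x' \sim x$ by \eqref{eq:equiv-x-omega}. By \eqref{eq:equiv-x-height} all such $y$ therefore share one common value $e := \phi_n^{h(y)-1}(y)$, and $\phi_n(e) = \phi_n^{h(y)}(y) = 0$ with $e \neq 0$ forces $h(e) = 1$ and $e \in \Omega_{[x]}$. Hence $(e,1)$ is the unique height-$1$ node of $T_{[x]}$ and $\hat{\phi}_n((e,1)) = (0,0)$, which is property~(1). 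Now join $(c,k+1)$ and $(d,k)$ by an edge exactly when $\hat{\phi}_n((c,k+1)) = (d,k)$; this is the structure induced by $\hat{\phi}_n$. By Step~1 each of the $|T_{[x]}| - 1$ nodes other than $(e,1)$ has precisely one neighbour of smaller height, namely its parent $(\phi_n(y),h(y)-1)$, so the graph has exactly $|T_{[x]}| - 1$ edges; and iterating $\hat{\phi}_n$ from any node produces the path $(y,h(y)), (\phi_n(y),h(y)-1), \ldots, (e,1)$ inside $T_{[x]}$, so the graph is connected. A connected graph on $N$ vertices with $N-1$ edges is a tree. The first-coordinate projection is a bijection $T_{[x]} \to \Omega_{[x]}$ intertwining $\hat{\phi}_n$ with $\phi_n$, giving the claimed isomorphism $T_{[x]} \cong \Omega_{[x]}$; and since $h$ is determined by $\phi_n$ and the parent of a node is forced by $\hat{\phi}_n$, the labeling and the edge set are uniquely determined by the labeled node set.

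\emph{Step 3: the child nodes.} Fix $(d,k) \in T_{[x]}$. By the edge rule its children are exactly the nodes $(c,k+1) \in T_{[x]}$ with $\hat{\phi}_n((c,k+1)) = (d,k)$, i.e.\ with $\phi_n(c) = d$ and $h(c) = k+1$ (which by Step~1 forces $h(d) = k$). Every such $c \in \RD_n$ indeed belongs to $\Omega_{[x]}$: from $\phi_n(c) = d$ we get $d \in \Omega(c)$, and $d \in \Omega_{[x]}$ then yields $c \sim x$ by \eqref{eq:equiv-x-omega}, so $(c,k+1) \in T_{[x]}$. Therefore, if $(d,k)$ has $m$ children, they are precisely $\{(c_i,k+1)\}_{1 \leq i \leq m}$ with $\hat{\phi}_n((c_i,k+1)) = (d,k)$, which is property~(2).

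\emph{Main obstacle.} There is no serious difficulty here; the only point that requires care is Step~1 together with the attendant closure claims — that applying $\phi_n$ inside $\Omega_{[x]}$ never leaves the set and lowers the height label by exactly one. Once that is secured, the identification of $\Omega_{[x]}$ with a rooted tree and the verification of properties~(1) and~(2) are routine bookkeeping resting on Lemmas~\ref{lem:height}, \ref{lem:phi-k-neq-l} and \ref{lem:orbit-decomp}.
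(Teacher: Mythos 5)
Your proof is correct and follows essentially the same route as the paper: identify the unique height-one element $e$ of $\Omega_{[x]}$ via the equivalence relation \eqref{eq:equiv-x-height}, use $\hat{\phi}_n$ as the parent map, and transfer the orbit structure to the labeled node set \eqref{eq:tree-def}. Your Step~1 (the explicit height decrement $h(\phi_n(y))=h(y)-1$) and the vertex/edge count plus connectivity check merely make explicit what the paper phrases as the orbits being simple paths ordered by height and $\Omega_{[x]}$ being a cover of such paths closed at $e$, so this is a refinement rather than a different argument.
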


\begin{proof}
By construction of $\Omega_{[x]}$, we obviously have a one-to-one correspondence
\begin{equation} \label{eq:y-hy}
  y \longleftrightarrow (y,h(y)) \quad (y \in \Omega_{[x]}).
\end{equation}
We can define a labeled tree by
\[
  T_{[x]} = \Set{ (y,h(y)) :  y \in \Omega_{[x]} },
\]
labeling the nodes with $(y,h(y))$ uniquely. We will show that
\[
  T_{[x]} \cong  \Omega_{[x]}
\]
having the same structure. By Lemmas \ref{lem:phi-k-neq-l} and \ref{lem:orbit-decomp}
we infer that an orbit $\Omega(y)$ represents a simple path
\begin{equation} \label{eq:simple-path}
  \Set{ y, \phi_n(y), \ldots, \phi_n^{h(y)-1}( y ) },
\end{equation}
where the elements are ordered by height. The set $\Omega_{[x]}$ covers all
orbits $\Omega(y)$, respectively simple paths as in \eqref{eq:simple-path},
where $y \in [x]$. Since the equivalence relation on $\RD_n$ satisfies
\eqref{eq:equiv-x-height}, all such simple paths are closed by a common
element $e$, where
\begin{equation} \label{eq:elem-e}
  e = \phi_n^{h(y)-1}( y ) \quad \text{for all } y \in [x],
\end{equation}
being the only element with height $h(e) = 1$ in $\Omega_{[x]}$. The structure
of $\Omega_{[x]}$ is induced by $\phi_n$, where each element
$y \in \Omega_{[x]}$ has its height and a successor $\phi_n(y) \in \Omega_{[x]}$,
if $y \neq e$. By \eqref{eq:y-hy} and the definition of $\hat{\phi}_n$ the
properties of $\Omega_{[x]}$ are transferred to $T_{[x]}$. As a result, the
tree $T_{[x]}$ is a cover of simple paths. Each node $(y, h(y)) \in T_{[x]}$
has a parent node $\hat{\phi}_n( (y, h(y)) ) = ( \phi_n(y), h(y)-1 ) \in T_{[x]}$,
if $y \neq e$.

Property (1): By \eqref{eq:elem-e} there exists only one element $e$ with
$h(e) = 1$. Thus, $(e,1) \in T_{[x]}$ is the unique root node with
$\hat{\phi}_n( (e,1) ) = (0, 0)$.

Property (2): We assume that $(d,k) \in T_{[x]}$ has $m$ child nodes. Since
$T_{[x]}$ is a cover of simple paths, there exist $m$ different paths, such that
\[
  \Set{(c_i, h(c_i)), \hat{\phi}_n( (c_i, h(c_i)) ) = (d, h(c_i)-1),
    \ldots, (e, 1)} \subset T_{[x]}
    \quad (1 \leq i \leq m),
\]
where $h(c_1) = \cdots = h(c_m) = k + 1$.
\end{proof}

\begin{Proposition} \label{prop:jnk-sets}
Let $\phi \in \Phi$ and $k, n \geq 1$. Assume that $\phi_n$ has no cycle.
Define
\[
  J_{n,k}( \phi ) = \Set{ x \in \RD_n : \phi_n^k( x ) \in \RD_n }.
\]
Then we have
\[
  | J_{n,k}( \phi ) | \leq \max (n-k, 0).
\]
In particular,
\[
  | J_{n,1}( \phi ) | = n - | \RDC_n |.
\]
\end{Proposition}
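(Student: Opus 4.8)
The plan is to combine the tree description from Lemma~\ref{lem:tree} with a counting argument. First I would observe that $x \in J_{n,k}(\phi)$ means that the orbit $\Omega(x)$ has length $h(x) > k$, equivalently that in the labeled tree $T_{[x]}$ the node $(x, h(x))$ sits at depth $h(x) \geq k+1$. So $J_{n,k}(\phi)$ is exactly the union, over all equivalence classes $[x] \in \RDC_n$, of the nodes of $T_{[x]}$ at depth at least $k+1$. The key structural fact I would extract from Lemma~\ref{lem:tree} is that each tree $T_{[x]}$ has a unique root at depth $1$ and that every node has a parent, so each tree with $|\Omega_{[x]}|$ nodes contributes at most $|\Omega_{[x]}| - k$ nodes at depth $\geq k+1$ when $|\Omega_{[x]}| > k$, and none otherwise; in either case the contribution is $\leq \max(|\Omega_{[x]}| - k, 0)$.

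Next I would sum over classes. Using the disjoint decomposition $\RD_n = \mathring{\bigcup}_{[x]} \Omega_{[x]}$ from \eqref{eq:disjoint-decomp}, we get $\sum_{[x]} |\Omega_{[x]}| = n$. Writing $r = |\RDC_n|$ for the number of classes, and using that each class contributes at least one node (its root) at depth $1$ which is never counted, I would bound $|J_{n,k}(\phi)| \leq \sum_{[x]} \max(|\Omega_{[x]}| - k, 0) \leq \sum_{[x]} (|\Omega_{[x]}| - 1) = n - r \leq n - 1$. To get the sharper bound $\max(n-k,0)$ I would argue instead that for a single class the number of depth-$\geq k+1$ nodes is $\leq |\Omega_{[x]}| - k$ when positive, and then note that dropping the classes that contribute zero only helps: if $t$ classes have size $> k$, their sizes sum to at most $n$, so the total is $\leq n - tk \leq n - k$ provided $t \geq 1$; and if $t = 0$ the set $J_{n,k}(\phi)$ is empty, matching $\max(n-k,0) = 0$ in case $n \leq k$. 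Actually the cleanest route is: each nonempty-contributing tree loses at least its first $k$ levels (one node per level, by the root-and-parent property giving a path of length $k$ from any depth-$(k+1)$ node down to the root), so $|J_{n,k}(\phi)| \leq n - k$ whenever $J_{n,k}(\phi) \neq \emptyset$, and the bound $\max(n-k,0)$ covers the empty case.

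For the final equality $|J_{n,1}(\phi)| = n - |\RDC_n|$, I would specialize $k = 1$: here $J_{n,1}(\phi) = \{x \in \RD_n : \phi_n(x) \in \RD_n\} = \{x : h(x) \geq 2\}$, i.e.\ all non-root nodes across all trees. Since each tree $T_{[x]}$ has exactly one root (Lemma~\ref{lem:tree}(1)) and the trees partition $\RD_n$, the number of non-root nodes is $n - |\RDC_n|$. The main obstacle is making the per-tree counting rigorous — specifically, justifying that from any node at depth $h(x)$ there is a descending path through exactly one node at each intermediate depth down to the unique root, so that counting nodes at depth $\geq k+1$ necessarily excludes at least $k$ nodes (one per level $1, \ldots, k$) of that tree. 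This follows directly from property (2) of Lemma~\ref{lem:tree} (each node has a unique parent at depth one less) together with property (1) (a unique root at depth $1$), but it should be stated explicitly rather than left implicit.
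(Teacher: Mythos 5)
Your proof is correct and rests on the same ingredients as the paper's: the orbit/tree decomposition of Lemmas~\ref{lem:orbit-decomp} and \ref{lem:tree} together with the fact that heights drop by exactly one under $\phi_n$, so a node of height $\geq k+1$ forces at least one node at each of the heights $1,\ldots,k$ in its tree, and the $k=1$ case is settled by counting the unique roots, one per class. The paper merely organizes the bookkeeping differently — it bounds the complement $\overline{J_{n,k}}(\phi)=\{x\in\RD_n:\phi_n^k(x)=0\}$ from below by $k$ using the global height-count sequence $(s_\nu)$ and a no-gaps argument (which it then reuses for Proposition~\ref{prop:jnk-part}) — whereas your direct per-tree count is equally valid and even yields the marginally sharper bound $n-tk$, with $t$ the number of classes containing an element of height exceeding $k$.
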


\begin{proof}
Since $\phi_n$ has no cycle, we have by Lemma~\ref{lem:height} that
\[
  \phi_n^n( x ) = 0 \quad (x \in \RD_n),
\]
which implies that
\begin{equation} \label{eq:jnk-0}
  | J_{n,k}( \phi ) | = 0 \quad (k \geq n).
\end{equation}
Define the complementary set
\[
  \overline{J_{n,k}}( \phi ) = \Set{ x \in \RD_n : \phi_n^k( x ) = 0 },
\]
such that
\begin{equation} \label{eq:jnk-compl-union}
  J_{n,k}( \phi ) \cup \overline{J_{n,k}}( \phi ) = \RD_n.
\end{equation}
In view of \eqref{eq:jnk-0} and \eqref{eq:jnk-compl-union}, we will equivalently
show for the remaining cases that
\begin{equation} \label{eq:jnk-compl-bound}
  | \overline{J_{n,k}}( \phi ) | \geq k \quad (1 \leq k < n).
\end{equation}
By Lemmas \ref{lem:orbit-decomp} and \ref{lem:tree}, there exist unique labeled
trees $T_{[x]}$ for $[x] \in \RDC_n$, where
\[
  \sum_{[x] \in \RDC_n} | T_{[x]} | = n.
\]
More precisely, they build a disjoint decomposition of $\RD_n$, if we only
consider the first component of the elements $(y,h(y)) \in T_{[x]}$.
We will count those elements in all trees, that vanish under $\phi_n^k$. Since
the elements can be ordered by height, we can write
\begin{equation} \label{eq:jnk-compl-count}
  | \overline{J_{n,k}}( \phi ) |
    = | \Set{ x \in \RD_n : \phi_n^k( x ) = 0 } |
    = \sum_{[x] \in \widetilde{\RD}_n}
      \sum_{\substack{(y,j) \in T_{[x]}\\j \leq k}} 1.
\end{equation}
In particular, we obtain by counting the root nodes via
\eqref{eq:jnk-compl-count} and using \eqref{eq:jnk-compl-union} that
\begin{equation} \label{eq:jnk-compl-1}
  | \overline{J_{n,1}}( \phi ) | = | \widetilde{\RD}_n | \geq 1
    \quad \text{and} \quad
    | J_{n,1}( \phi ) | = n - | \RDC_n |,
\end{equation}
showing the special case $k=1$. Define $\overline{J_{n,0}}( \phi ) = \emptyset$.
Instead of \eqref{eq:jnk-compl-count}, it is more convenient to consider
\begin{equation} \label{eq:jnk-compl-sk}
  s_k = | \overline{J_{n,k}}( \phi ) \backslash \overline{J_{n,k-1}}( \phi ) |
      = \sum_{[x] \in \widetilde{\RD}_n}
        \sum_{(y,k) \in T_{[x]}} 1
    \quad (1 \leq k \leq n),
\end{equation}
where $s_k$ is the number of elements having height $k$. We derive that
\[
  \sum_{k=1}^n s_k = | \overline{J_{n,n}}( \phi ) | = n,
\]
utilizing the telescoping sum induced by \eqref{eq:jnk-compl-sk}, where the
last equation follows by \eqref{eq:jnk-0} and \eqref{eq:jnk-compl-union}.
Hence, $S = (s_1, \ldots, s_n)$ describes an ordered partition of $n$, where
$s_1 \geq 1$ follows by \eqref{eq:jnk-compl-1}. Now, we use the properties of a
tree $T_{[x]}$ given by Lemma~\ref{lem:tree}:
\begin{enumerate}
\item A tree $T_{[x]}$, being a labeled tree, is connected.
\item The heights of a parent node $(d, h(d)) \in T_{[x]}$ and its child nodes
      $(c_i,h(c_i)) \in T_{[x]}$ differ by $1$, such that $h(c_i) = h(d)+1$.
\end{enumerate}
As a consequence, we infer that the sequence $(s_\nu)_{1 \leq \nu \leq n}$,
counting elements of height $1$ up to $n$, cannot have gaps, i.e.\ zero
elements, in the middle, such that
\[
  S = (s_1, \ldots, s_{l-1}, 0, s_{l+1}, \ldots, s_n)
\]
with some index $l$, where $1 < l < n$, and $s_1, s_n \geq 1$.
Therefore, we either must have
\begin{equation} \label{eq:jnk-s-part}
  S = (1, \ldots, 1)
    \quad \text{or} \quad
    S = (s_1, \ldots, s_r, 0, \ldots, 0),
\end{equation}
where $1 \leq r < n$ and $s_1, \ldots, s_r \geq 1$.
In any case, we finally conclude by an easy counting argument that
\begin{equation} \label{eq:jnk-sum}
  | \overline{J_{n,k}}( \phi ) | = \sum_{\nu = 1}^k s_\nu \geq k
    \quad (1 \leq k < n),
\end{equation}
since $S$ is an ordered partition of $n$. This shows \eqref{eq:jnk-compl-bound}
completing the proof.
\end{proof}

\begin{Proposition} \label{prop:jnk-part}
Let $\phi \in \Phi$ and $k, n \geq 1$. Assume that $\phi_n$ has no cycle. Let
\[
  p_\nu = | \Set{ x \in \RD_n : h(x) = \nu } |
    \quad (1 \leq \nu \leq n)
\]
and
\[
  m = \max_{1 \leq \nu \leq n} \, \Set{ \nu : p_\nu \geq 1 }.
\]
Define
\[
  \pi = (p_1, \ldots, p_m).
\]
Then $\pi$ is an ordered partition of $n$ and length $m$ with $1 \leq m \leq n$,
where $p_1, \ldots, p_m \geq 1$. Moreover,
\[
  | J_{n,k}( \phi ) | = n - \sum_{\nu = 1}^k p_\nu
    \quad (1 \leq k \leq m)
\]
and $| J_{n,k}( \phi ) | = 0$ for $k \geq m$.
\end{Proposition}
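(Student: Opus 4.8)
The plan is to reduce everything to the machinery already assembled in the proof of Proposition~\ref{prop:jnk-sets}. First I would observe that the quantity $p_\nu$ defined here is exactly the number $s_\nu = | \overline{J_{n,\nu}}( \phi ) \backslash \overline{J_{n,\nu-1}}( \phi ) |$ appearing there, i.e.\ the count of $x \in \RD_n$ with $h(x) = \nu$. In that proof it was shown, using the disjoint decomposition of $\RD_n$ into the labeled trees $T_{[x]}$ from Lemmas~\ref{lem:orbit-decomp} and \ref{lem:tree} and the parent--child height relation $h(c_i) = h(d)+1$, that $S = (s_1, \ldots, s_n)$ is an ordered partition of $n$ (the telescoping identity \eqref{eq:jnk-compl-sk} gives $\sum_{\nu=1}^n s_\nu = | \overline{J_{n,n}}( \phi ) | = n$) with $s_1 \geq 1$, and moreover that this sequence has no interior gaps, so it takes one of the two shapes in \eqref{eq:jnk-s-part}.

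Second, I would translate the shape \eqref{eq:jnk-s-part} into the assertion about $\pi$. Since $m$ is the largest index with $p_m = s_m \geq 1$, the ``no interior gaps'' property forces $p_1, \ldots, p_m \geq 1$ and $p_\nu = 0$ for $\nu > m$; combined with $s_1 \geq 1$ and Lemma~\ref{lem:height} (heights lie in $\{1, \ldots, n\}$, so at most $n$ distinct values occur) this yields $1 \leq m \leq n$. Then $\pi = (p_1, \ldots, p_m)$ has all parts positive and $\sum_{\nu=1}^m p_\nu = \sum_{\nu=1}^n s_\nu = n$, so $\pi$ is a composition of $n$ of length $m$, as claimed.

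Third, for the count $| J_{n,k}( \phi ) |$ I would invoke $J_{n,k}( \phi ) \cup \overline{J_{n,k}}( \phi ) = \RD_n$ from \eqref{eq:jnk-compl-union} together with the telescoping consequence $| \overline{J_{n,k}}( \phi ) | = \sum_{\nu=1}^k s_\nu = \sum_{\nu=1}^k p_\nu$ of \eqref{eq:jnk-compl-sk}. This gives $| J_{n,k}( \phi ) | = n - \sum_{\nu=1}^k p_\nu$ for $1 \leq k \leq m$. For $k \geq m$, since $p_\nu = 0$ whenever $\nu > m$ we get $\sum_{\nu=1}^k p_\nu = \sum_{\nu=1}^m p_\nu = n$, hence $| J_{n,k}( \phi ) | = 0$, in agreement with \eqref{eq:jnk-0}.

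As for the main obstacle: there is essentially no new difficulty, because the genuine combinatorial content—the disjoint tree decomposition of $\RD_n$ and the absence of interior gaps in the height distribution—was already established for Proposition~\ref{prop:jnk-sets}. The only point needing a little care is making the identification $p_\nu = s_\nu$ explicit and checking that the ``no interior gaps'' conclusion indeed pins down $m$ and forces each of $p_1, \ldots, p_m$ to be positive; once that is in place, both displayed formulas for $| J_{n,k}( \phi ) |$ follow immediately.
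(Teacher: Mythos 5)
Your proposal is correct and follows essentially the same route as the paper's own proof: both reduce to the machinery of Proposition~\ref{prop:jnk-sets}, identify $p_\nu$ with $s_\nu$ from \eqref{eq:jnk-compl-sk}, use the no-gaps shape \eqref{eq:jnk-s-part} to see that $\pi$ is an ordered partition of $n$ of length $m$, and then obtain $|J_{n,k}(\phi)| = n - \sum_{\nu=1}^k p_\nu$ from \eqref{eq:jnk-compl-union} and the telescoping count, with $|J_{n,k}(\phi)| = 0$ for $k \geq m$. If anything, your write-up is slightly more explicit than the paper's about why the identification $p_\nu = s_\nu$ and the bound $1 \leq m \leq n$ hold.
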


\begin{proof}
We use and extend the proof of Proposition~\ref{prop:jnk-sets}.
Regarding \eqref{eq:jnk-s-part} the sequence $(s_\nu)_{1 \leq \nu \leq n}$
counts elements of height $1$ up to $n$. Therefore, we observe with $m=r$ that
\[
  \pi = (p_1, \ldots, p_m) = (s_1,\ldots,s_r),
\]
except for the case where $\pi = ( 1, \ldots, 1 )$ and $m=n$.
By \eqref{eq:jnk-compl-union} and \eqref{eq:jnk-sum} it follows that
\[
  | J_{n,k}( \phi ) | = n - \sum_{\nu = 1}^k p_\nu
    \quad (1 \leq k \leq m).
\]
Since $\pi$ is an ordered partition of $n$, we have $| J_{n,m}( \phi ) | = 0$
and consequently that $| J_{n,k}( \phi ) | = 0$ for $k \geq m$.
\end{proof}

\begin{Lemma} \label{lem:part-estim}
Let $\pi = (p_1, \ldots, p_m)$ be an ordered partition of $n$ and length $m$,
where $1 \leq m \leq n$. Then
\[
  \sum_{\nu=1}^m \nu p_\nu \leq n m - \binom{m}{2}.
\]
\end{Lemma}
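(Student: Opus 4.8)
The plan is to isolate the ``surplus'' of each part beyond the minimal value $1$. Since $\pi=(p_1,\dots,p_m)$ is an ordered partition of $n$ with $p_\nu\ge 1$ for all $\nu$, I would set $q_\nu=p_\nu-1\ge 0$, so that $\sum_{\nu=1}^m q_\nu=n-m$. Substituting $p_\nu=1+q_\nu$ then splits the target sum as
\[
  \sum_{\nu=1}^m \nu p_\nu=\sum_{\nu=1}^m\nu+\sum_{\nu=1}^m\nu q_\nu=\binom{m+1}{2}+\sum_{\nu=1}^m\nu q_\nu,
\]
using the standard identity $\sum_{\nu=1}^m\nu=\binom{m+1}{2}$.

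Next I would bound the remaining sum crudely: every index satisfies $\nu\le m$ and the $q_\nu$ are nonnegative, hence
\[
  \sum_{\nu=1}^m\nu q_\nu\le m\sum_{\nu=1}^m q_\nu=m(n-m).
\]
Plugging this in and simplifying the binomial arithmetic --- using $\binom{m+1}{2}-m^2=-\binom{m}{2}$, equivalently $\tfrac{m(m+1)}{2}-m^2=-\tfrac{m(m-1)}{2}$ --- gives
\[
  \sum_{\nu=1}^m\nu p_\nu\le \binom{m+1}{2}+m(n-m)=nm-\binom{m}{2},
\]
which is exactly the asserted inequality.

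There is no genuine obstacle in this argument; the only thing to watch is the elementary algebra relating $\binom{m+1}{2}$ and $\binom{m}{2}$. For completeness, and with an eye toward the sharpness remark following Theorem~\ref{thm:matrix-inverse}, I would also note that equality in the bound on $\sum_{\nu=1}^m\nu q_\nu$ forces $q_\nu=0$ for all $\nu<m$, so that equality holds in the lemma precisely when the whole surplus $n-m$ is concentrated in the last part, i.e.\ for $\pi=(1,\dots,1,n-m+1)$; in particular it is attained by $\pi=(1,\dots,1)$ in the case $m=n$.
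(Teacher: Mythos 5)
Your proof is correct, but it takes a different route from the paper. The paper fixes $m$ and inducts on $n$: it starts from the base case $\pi=(1,\dots,1)\in\mathcal{P}_{m,m}$ with value $\binom{m+1}{2}$, and then grows partitions by incrementing a single part, checking $\varrho(\pi'_j)=\varrho(\pi)+j\le (n+1)m-\binom{m}{2}$; implicitly it also relies on every partition of $n+1$ of length $m$ arising this way from one of $n$. Your argument instead strips off the baseline $(1,\dots,1)$ via $q_\nu=p_\nu-1\ge 0$ with $\sum_\nu q_\nu=n-m$, and bounds the surplus term by $\sum_\nu \nu q_\nu\le m(n-m)$, after which the identity $\binom{m+1}{2}+m(n-m)=nm-\binom{m}{2}$ finishes the job. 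This is shorter and avoids the induction (and the surjectivity of the growing construction) entirely; it also yields the equality analysis for free --- equality exactly when all surplus sits in the last part, $\pi=(1,\dots,1,n-m+1)$, which includes the case $\pi=(1,\dots,1)$, $m=n$ --- which dovetails with the sharpness remark after Theorem~\ref{thm:matrix-inverse} and with Proposition~\ref{prop:phi-x+1}. What the paper's induction buys in exchange is mainly an explicit description of how the partitions of fixed length are generated, but for the inequality itself your direct computation is the more economical argument.
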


\begin{proof}
Let $\mathcal{P}_{n,m}$ be the set of ordered partitions of $n$ and length $m$.
Define
\[
  \varrho( \pi ) = \sum_{\nu=1}^m \nu p_\nu \quad (\pi \in \mathcal{P}_{n,m}).
\]
We fix $m \geq 1$ for now and use induction on $n$. For $n = m$, we only have
$\pi = (1, \ldots, 1) \in \mathcal{P}_{n,n}$. Then it follows that
\[
  \varrho( \pi ) = \binom{m+1}{2} = m^2 - \binom{m}{2} \leq n m - \binom{m}{2}.
\]
Now assume the result is true for $n \geq m$.
Let $\pi = (p_1, \ldots, p_m) \in \mathcal{P}_{n,m}$. We set
\begin{equation} \label{eq:part-pi-1}
  \pi'_j = (p_1, \ldots, p_{j-1}, p_j+1, p_{j+1}, \ldots, p_m)
    \quad (1 \leq j \leq m),
\end{equation}
observing that $\pi'_j \in \mathcal{P}_{n+1,m}$. We then obtain that
\begin{equation} \label{eq:part-pi-2}
  \varrho( \pi'_j ) = \varrho( \pi ) + j
    \leq n m + j - \binom{m}{2}
    \leq (n+1) m - \binom{m}{2}
    \quad (1 \leq j \leq m).
\end{equation}
The set $\mathcal{P}_{n+1,m}$ can be constructed from $\mathcal{P}_{n,m}$
using \eqref{eq:part-pi-1}. Since $\pi \in \mathcal{P}_{n,m}$ has been chosen
arbitrarily, we infer that \eqref{eq:part-pi-2} holds for any
$\pi' \in \mathcal{P}_{n+1,m}$ showing the claim for $n+1$.
\end{proof}

%%%%%%%%%%%%%%%%%%%%%%%%%%%%%%%%%%%%%%%%%%%%%%%%%%%%%%%%%%%%%%%%%%%%%%%%%%%%%%%%
% section
%%%%%%%%%%%%%%%%%%%%%%%%%%%%%%%%%%%%%%%%%%%%%%%%%%%%%%%%%%%%%%%%%%%%%%%%%%%%%%%%

\section{Matrix properties}

Recall the $n \times n$ matrix
\[
  E_n(i,j) = \Vect{e}_i \Vect{e}_j^t,
\]
which only has the entry $1$ at row $i$ and column $j$ and zeros elsewhere.

\begin{Lemma} \label{lem:e-matrix}
Let $n \geq 2$ and $a,b,c,d \in \RD_n$. Then
\[
  E_n( a, b ) E_n( c, d ) = \begin{cases}
    E_n( a, d ), & \text{if } b = c, \\
    0,           & \text{else}.
  \end{cases}
\]
\end{Lemma}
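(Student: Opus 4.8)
The plan is to use the outer-product description $E_n(i,j) = \Vect{e}_i \Vect{e}_j^t$ together with associativity of matrix multiplication. Writing
\[
  E_n(a,b)\, E_n(c,d) = \Vect{e}_a \Vect{e}_b^t\, \Vect{e}_c \Vect{e}_d^t
    = \Vect{e}_a \,(\Vect{e}_b^t \Vect{e}_c)\, \Vect{e}_d^t,
\]
the middle factor $\Vect{e}_b^t \Vect{e}_c$ is a scalar, namely the standard inner product of two unit vectors, which equals $1$ if $b = c$ and $0$ otherwise. Substituting this back gives $\Vect{e}_a \Vect{e}_d^t = E_n(a,d)$ in the case $b = c$ and the zero matrix otherwise, which is exactly the claimed formula.

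Alternatively, and equally quickly, one can argue at the level of entries: for $1 \le i,j \le n$ one has $(E_n(a,b)\,E_n(c,d))_{ij} = \sum_{1 \le k \le n} (E_n(a,b))_{ik}(E_n(c,d))_{kj}$, and since $(E_n(a,b))_{ik}$ is $1$ only when $(i,k) = (a,b)$ and $(E_n(c,d))_{kj}$ is $1$ only when $(k,j) = (c,d)$, the summand is nonzero only for $k = b = c$, in which case it contributes $1$ precisely when $(i,j) = (a,d)$. Both routes are purely mechanical; there is no real obstacle here, the only point worth isolating is that $\Vect{e}_b^t \Vect{e}_c = \delta_{bc}$, after which the result is immediate. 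This lemma is stated for later use when computing powers of $\MA_n(\phi)$, where products of such elementary matrices appear.
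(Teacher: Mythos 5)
Your first argument is exactly the paper's proof: write $E_n(a,b)E_n(c,d) = \Vect{e}_a(\Vect{e}_b^t\Vect{e}_c)\Vect{e}_d^t = \delta_{bc}\,E_n(a,d)$ using associativity and the fact that $\Vect{e}_b^t\Vect{e}_c$ is the Kronecker delta. The alternative entrywise computation is also correct but adds nothing beyond the paper's route; the proposal is fine as it stands.
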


\begin{proof}
By definition we can write
\[
  E_n( a, b ) E_n( c, d ) = (\Vect{e}_a \Vect{e}_b^t)(\Vect{e}_c \Vect{e}_d^t)
    = \Vect{e}_a (\Vect{e}_b^t \Vect{e}_c) \Vect{e}_d^t
    = \delta_{bc} \, E_n( a, d ),
\]
using Kronecker's delta.
\end{proof}

\begin{Lemma} \label{lem:matrix-nilpotent}
Let $A$ be an $n \times n$ matrix with $n \geq 2$. If $A$ is nilpotent of
degree $k$, then we have
\[
  \det(I-A) = 1 \quad \text{and} \quad
    (I-A)^{-1} = I + A + \cdots + A^{k-1}.
\]
\end{Lemma}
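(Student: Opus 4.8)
The plan is to establish the inverse formula first by a direct telescoping computation, and then to read off the value of the determinant. Nilpotency of degree $k$ means precisely $A^k = 0$ (we will not need $A^{k-1} \neq 0$), so everything hinges on that single identity together with the fact that $A$ commutes with its own powers.

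First I would form the product $(I-A)(I + A + \cdots + A^{k-1})$ and expand it. Each term $A^j$ with $1 \leq j \leq k-1$ occurs exactly once with a plus sign and once with a minus sign, so the sum telescopes and what remains is $I - A^k = I$. Performing the same computation with the two factors in the opposite order yields $(I + A + \cdots + A^{k-1})(I-A) = I$ as well. Hence $I - A$ is invertible with two-sided inverse $I + A + \cdots + A^{k-1}$, which is the second assertion of the lemma. (Equivalently, this is just the finite geometric series identity, legitimate because $A$ commutes with itself.)

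It then remains to show that $\det(I-A) = 1$, and here one must be slightly careful: multiplicativity of the determinant applied to the identity just proved gives only $\det(I-A)\,\det(I + A + \cdots + A^{k-1}) = 1$, i.e.\ that $\det(I-A)$ is a unit, which over a general ring is weaker than being $1$. To pin down the exact value I would use the characteristic polynomial $\chi_A(\lambda) = \det(\lambda I - A)$. Since $A$ is nilpotent, any eigenvalue $\lambda$ of $A$ (over $\mathbb{C}$) satisfies $\lambda^k = 0$, hence $\lambda = 0$; as $\chi_A$ is monic of degree $n$ with all roots equal to $0$, we conclude $\chi_A(\lambda) = \lambda^n$. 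Evaluating at $\lambda = 1$ gives $\det(I-A) = \det(1\cdot I - A) = \chi_A(1) = 1$, as desired.

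The proof is entirely routine; the only step demanding a moment's attention is this last one, namely distinguishing ``$\det(I-A)$ is a unit'' from ``$\det(I-A) = 1$'', which the characteristic-polynomial computation settles cleanly. (Alternatively, one could invoke that over $\mathbb{C}$ a nilpotent matrix is similar to a strictly upper triangular one, making $I-A$ unitriangular and hence of determinant $1$; the characteristic-polynomial route has the mild advantage of not requiring any change of basis.)
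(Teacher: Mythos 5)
Your proof is correct. The inverse part is exactly the paper's argument: expand $(I-A)(I+A+\cdots+A^{k-1})$, use $A^k=0$, and conclude invertibility with the stated inverse. For the determinant the paper argues via similarity: $A$ nilpotent is similar to a strictly upper triangular $U$, so $I-A = T(I-U)T^{-1}$ is similar to a unitriangular matrix and $\det(I-A)=1$; you instead observe that all complex eigenvalues of a nilpotent matrix vanish, so $\chi_A(\lambda)=\lambda^n$, and evaluate $\det(I-A)=\chi_A(1)=1$. These are two faces of the same fact (nilpotent $\Rightarrow$ spectrum $\{0\}$), and you even cite the paper's triangularization as an alternative; your version avoids choosing a basis, the paper's avoids invoking the characteristic polynomial. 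Your explicit remark that multiplicativity of the determinant alone only shows $\det(I-A)$ is a unit (hence $\pm 1$ over $\ZZ$) is a worthwhile point of care that the paper's similarity argument sidesteps implicitly.
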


\begin{proof}
We first consider the decomposition
\[
  I = (I-A)(I + A + \cdots + A^{k-1}),
\]
since $A^k=0$. This shows that $I-A$ is invertible with the inverse as given
above. Since $A$ is nilpotent, there exists a similar matrix $U = T^{-1} A T$,
that is an upper triangular matrix having zeros in its diagonal. Thus, we obtain
that $I - A = T ( I - U ) T^{-1}$ and consequently that $\det(I-A) = 1$.
\end{proof}

\begin{Proposition} \label{prop:matrix-power}
Let $\phi \in \Phi$ and $n \geq 2$. Then we have
\[
  \MA_n( \phi )^k = \sum_{( \phi^k_n(x), x ) \in \RD_n^2} E_n( \phi^k_n(x), x )
    \quad(k \geq 1).
\]
\end{Proposition}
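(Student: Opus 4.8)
I would prove this by induction on $k$, with the case $k=1$ coming directly from the definition \eqref{eq:matrix-mn-def} and the inductive step resting on the multiplication rule for the matrices $E_n(\cdot,\cdot)$ in Lemma~\ref{lem:e-matrix}.

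For the base case I would observe that, by \eqref{eq:matrix-mn-def}, the $x$-th column of $\MA_n(\phi)$ is $\Vect{e}_{\phi_n(x)}$; when $\phi_n(x)\in\RD_n$ this column, viewed as a matrix, is $\Vect{e}_{\phi_n(x)}\Vect{e}_x^t=E_n(\phi_n(x),x)$, and when $\phi_n(x)=0$ the vector $\Vect{e}_0$ is zero and the column contributes nothing. Summing over the columns gives $\MA_n(\phi)=\sum_{(\phi_n(x),x)\in\RD_n^2}E_n(\phi_n(x),x)$, i.e.\ the asserted identity for $k=1$.

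For the inductive step, assuming the formula for some $k\ge1$, I would expand
\[
  \MA_n(\phi)^{k+1}=\MA_n(\phi)\,\MA_n(\phi)^k
  =\sum_{(\phi_n(y),y)\in\RD_n^2}\ \sum_{(\phi_n^k(x),x)\in\RD_n^2}
    E_n(\phi_n(y),y)\,E_n(\phi_n^k(x),x),
\]
and then apply Lemma~\ref{lem:e-matrix}: the inner product is $0$ unless $y=\phi_n^k(x)$, in which case it equals $E_n(\phi_n(y),x)=E_n(\phi_n^{k+1}(x),x)$. Thus $\MA_n(\phi)^{k+1}$ is the sum of the matrices $E_n(\phi_n^{k+1}(x),x)$ over those $x$ with $(\phi_n^k(x),x)\in\RD_n^2$ and $(\phi_n^{k+1}(x),\phi_n^k(x))\in\RD_n^2$, so it remains to identify this index set with $\Set{x:(\phi_n^{k+1}(x),x)\in\RD_n^2}$.

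The one point that needs care—and the only non-formal ingredient of the argument—is this last identification, which hinges on the fact that $0$ is absorbing for $\phi_n$ (indeed $\phi(0)=0$ forces $\phi_n(0)=0$, so once an iterate vanishes all later ones do). One inclusion is immediate, since both displayed conditions force $x,\phi_n^{k+1}(x)\in\RD_n$; for the converse, if $(\phi_n^{k+1}(x),x)\in\RD_n^2$ then $\phi_n^{k+1}(x)\ne0$, which by absorption forces $\phi_n^j(x)\in\RD_n$ for all $0\le j\le k+1$, in particular $\phi_n^k(x)\in\RD_n$, yielding the two required membership conditions. This completes the induction. As an alternative avoiding the index bookkeeping altogether, one can use the convention that $\Vect{e}_0$ is the zero vector, note that then $\MA_n(\phi)\Vect{e}_x=\Vect{e}_{\phi_n(x)}$ holds for every $x\in\RD_{n,0}$, iterate this in one line to get $\MA_n(\phi)^k\Vect{e}_x=\Vect{e}_{\phi_n^k(x)}$, and read the formula off the columns of $\MA_n(\phi)^k$.
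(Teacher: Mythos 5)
Your proof is correct and follows essentially the same route as the paper: induction on $k$, with the base case read off the column definition \eqref{eq:matrix-mn-def} and the inductive step carried out via the product rule of Lemma~\ref{lem:e-matrix}. The only difference is that you spell out the identification of the index set using the absorbing property of $0$, a detail the paper's proof leaves implicit when it discards the vanishing products.
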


\begin{proof}
We use induction on $k$. For $k = 1$, we infer by \eqref{eq:matrix-mn-def} that
\[
  \MA_n( \phi )
    = \Bigg( \Vect{e}_{\phi_n(1) }, \cdots, \Vect{e}_{\phi_n(n) } \Bigg)
    = \sum_{( \phi_n(x), x ) \in \RD_n^2} E_n( \phi_n(x), x ).
\]
Now assume the result is true for $k$. We then obtain that
\begin{align*}
  \MA_n( \phi )^{k+1} &= \MA_n( \phi ) \MA_n( \phi )^k \\
    &= \sum_{( \phi_n(y), y ) \in \RD_n^2} E_n( \phi_n(y), y )
       \sum_{( \phi^k_n(x), x ) \in \RD_n^2} E_n( \phi^k_n(x), x ) \\
    &= \sum_{\substack{( \phi_n(y), y ) \in \RD_n^2\\
         ( \phi^k_n(x), x ) \in \RD_n^2\\y = \phi^k_n(x)}}
         E_n( \phi_n(y), y ) E_n( \phi^k_n(x), x ) \\
    &= \sum_{( \phi^{k+1}_n(x), x ) \in \RD_n^2} E_n( \phi^{k+1}_n(x), x ).
\end{align*}
In the last two steps we have used Lemma~\ref{lem:e-matrix} to exclude
those products that provide a zero matrix. This shows the claim for $k+1$.
\end{proof}

\begin{Corollary} \label{corl:matrix-estim}
Let $\phi \in \Phi$ and $n \geq 2$. If $\phi_n$ has no cycle, then
\[
  \# \MA_n( \phi )^k = | J_{n,k}( \phi ) | \leq n - k
    \quad (1 \leq k \leq n).
\]
\end{Corollary}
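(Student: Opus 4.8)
The plan is to reduce the statement to a counting identity via Proposition~\ref{prop:matrix-power} and then to apply Proposition~\ref{prop:jnk-sets}. First I would rewrite the $k$-th power of $\MA_n(\phi)$ in terms of the set $J_{n,k}(\phi)$. For $x \in \RD_n$ the condition $(\phi_n^k(x), x) \in \RD_n^2$ is equivalent to $\phi_n^k(x) \in \RD_n$, which by definition means $x \in J_{n,k}(\phi)$. Hence Proposition~\ref{prop:matrix-power} yields
\[
  \MA_n(\phi)^k = \sum_{x \in J_{n,k}(\phi)} E_n(\phi_n^k(x), x) \quad (1 \leq k \leq n).
\]

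Next I would observe that this sum has no overlapping nonzero entries: the matrix $E_n(\phi_n^k(x), x)$ has its single nonzero entry, equal to $1$, in column $x$, and distinct elements $x \in J_{n,k}(\phi)$ index distinct columns. Thus the summands are pairwise disjoint in the sense of the paper, no cancellation or accumulation occurs, and $\MA_n(\phi)^k$ is a binary matrix whose number of nonzero entries equals the number of summands, namely $\# \MA_n(\phi)^k = | J_{n,k}(\phi) |$.

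Finally I would invoke Proposition~\ref{prop:jnk-sets}, which gives $| J_{n,k}(\phi) | \leq \max(n-k, 0)$; since $\max(n-k,0) = n-k$ for $1 \leq k \leq n$, this produces the asserted chain $\# \MA_n(\phi)^k = | J_{n,k}(\phi) | \leq n-k$. I do not anticipate any real obstacle here, as the argument is a straightforward bookkeeping reduction; the only point deserving a line of care is the disjointness of the summands, which is immediate from the fact that each summand's nonzero entry sits in the column indexed by $x$.
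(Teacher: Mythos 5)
Your proposal is correct and follows essentially the same route as the paper: expressing $\MA_n(\phi)^k$ via Proposition~\ref{prop:matrix-power}, identifying the index set of the sum with $J_{n,k}(\phi)$, and applying the bound from Proposition~\ref{prop:jnk-sets}. Your explicit remark that the summands occupy distinct columns (so the entry count equals $|J_{n,k}(\phi)|$ with no overlap) is a point the paper leaves implicit, but it is the same argument.
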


\begin{proof}
By Propositions \ref{prop:jnk-sets} and \ref{prop:matrix-power} we conclude that
\[
  \# \MA_n( \phi )^k
    = \sum_{( \phi^k_n(x), x ) \in \RD_n^2} 1
    = | \Set{ ( x, \phi^k_n(x) ) \in \RD_n^2 } |
    = | J_{n,k}( \phi ) | \leq n - k
    \quad (1 \leq k \leq n). \qedhere
\]
\end{proof}

\begin{Proposition} \label{prop:matrix-disjoint}
Let $\phi \in \Phi$ and $n \geq 2$. If $\phi_n$ has no cycle, then
\[
  \MA_n( \phi )^k \cap \MA_n( \phi )^l = 0 \quad (k \neq l, \, k,l \geq 1).
\]
\end{Proposition}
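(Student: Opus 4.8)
The plan is to read off the support (the set of positions of nonzero entries) of each power $M_n(\phi)^k$ directly from Proposition~\ref{prop:matrix-power} and then to show that two such supports, for distinct exponents, cannot meet.

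First, I would recall that by Proposition~\ref{prop:matrix-power},
\[
  M_n(\phi)^k = \sum_{( \phi^k_n(x), x ) \in \RD_n^2} E_n( \phi^k_n(x), x )
    \qquad (k \geq 1),
\]
and observe that in this sum the index $x$ runs over distinct columns, so each column of $M_n(\phi)^k$ contains at most one nonzero entry: in column $x$ the entry sits in row $\phi^k_n(x)$ precisely when $\phi^k_n(x) \in \RD_n$, and the column is zero otherwise. In particular $M_n(\phi)^k$ is a binary matrix whose support is $\Set{ (\phi^k_n(x), x) : x \in \RD_n,\ \phi^k_n(x) \in \RD_n }$.

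Next, suppose toward a contradiction that $M_n(\phi)^k$ and $M_n(\phi)^l$ have a common nonzero entry, say at position $(i,j)$, for some $k \neq l$ with $k, l \geq 1$. Since within a single power the column index of a nonzero entry determines the argument $x$ uniquely, the common column $j$ forces $i = \phi^k_n(j)$ and $i = \phi^l_n(j)$, with $j, \phi^k_n(j), \phi^l_n(j) \in \RD_n$. Hence $\phi^k_n(j) = \phi^l_n(j)$ with $j, \phi^k_n(j), \phi^l_n(j) \in \RD_n$ and $k \neq l$, contradicting Lemma~\ref{lem:phi-k-neq-l}, which applies because $\phi_n$ has no cycle. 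Therefore the supports are disjoint, i.e.\ $M_n(\phi)^k \cap M_n(\phi)^l = 0$.

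The only genuinely delicate point is the reduction in the previous paragraph: one must use that, within a single power $M_n(\phi)^k$, the column of a nonzero entry pins down the argument $x$, so that a shared position in two different powers yields an honest equality $\phi^k_n(j) = \phi^l_n(j)$ rather than merely $\phi^k_n(x) = \phi^l_n(x')$ for possibly different $x, x'$. Once that is in place, Lemma~\ref{lem:phi-k-neq-l} closes the argument at once, with no further estimates or case analysis required.
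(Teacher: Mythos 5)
Your proposal is correct and follows essentially the same route as the paper: read off the support of $\MA_n(\phi)^k$ from Proposition~\ref{prop:matrix-power}, note that a shared nonzero position forces the same column argument $x=j$, and derive $\phi^k_n(j)=\phi^l_n(j)$ in $\RD_n$, contradicting Lemma~\ref{lem:phi-k-neq-l}. Your explicit remark that the column index pins down the argument is a slightly more careful rendering of the step the paper states as $y=x$, but the argument is the same.
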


\begin{proof}
Assume to the contrary that we have
\[
  \MA_n( \phi )^k \cap \MA_n( \phi )^l \neq 0
\]
for some $k \neq l$. We then have an entry of both $\MA_n( \phi )^k$ and
$\MA_n( \phi )^l$, such that
\[
  E_n( \phi^k_n(y), y ) = E_n( \phi^l_n(x), x )
\]
using Proposition~\ref{prop:matrix-power}. This implies that $y = x \in \RD_n$
and consequently $\phi^k_n(x) = \phi^l_n(x) \in \RD_n$. The last condition
gives a contradiction in view of Lemma~\ref{lem:phi-k-neq-l}.
\end{proof}

%%%%%%%%%%%%%%%%%%%%%%%%%%%%%%%%%%%%%%%%%%%%%%%%%%%%%%%%%%%%%%%%%%%%%%%%%%%%%%%%
% section
%%%%%%%%%%%%%%%%%%%%%%%%%%%%%%%%%%%%%%%%%%%%%%%%%%%%%%%%%%%%%%%%%%%%%%%%%%%%%%%%

\section{Proof of theorems}

\begin{proof}[Proof of Theorem~\ref{thm:matrix-cycle}]
The local function $\phi_n$ has a cycle $\Cyc(\phi_n,m,x)$ by assumption. Using
\eqref{eq:matrix-vect} we have
\[
  \MA_n( \phi ) \, \Vect{e}_y = \Vect{e}_{\phi_n(y)}
    \quad (y \in \Cyc(\phi_n,m,x)).
\]
In view of \eqref{eq:cycle-def} and \eqref{eq:cycle-prop}, $\phi_n$ maps
$\Cyc(\phi_n,m,x)$ onto itself in a cyclic way. Define
\[
  \Vect{v} = \sum_{y \in \Cyc(\phi_n,m,x)} \Vect{e}_y.
\]
We then infer that
\[
  \MA_n( \phi ) \, \Vect{v} = \Vect{v}.
\]
As a result, the vector $\Vect{v}$ is an eigenvector of $\MA_n( \phi )$ with
eigenvalue $1$. Consequently, we obtain that
\[
  \det \MI_n( \phi ) = \det( I - \MA_n( \phi ) ) = 0. \qedhere
\]
\end{proof}

\begin{proof}[Proof of Theorem~\ref{thm:matrix-inverse}]
By assumption $\phi_n \in \Phi$ has no cycle.

(1), (2): By Propositions~\ref{prop:matrix-power} and
\ref{prop:matrix-disjoint} the matrices $\MA_n( \phi )^k$ are binary matrices
for $k \geq 1$, which are disjoint for different exponents. The estimate
\begin{equation} \label{eq:matrix-estim}
  \# \MA_n( \phi )^k \leq n-k \quad (1 \leq k \leq n)
\end{equation}
is given by Corollary~\ref{corl:matrix-estim}. This implies that $\MA_n( \phi)^n$
is the zero matrix and consequently $\MA_n( \phi )$ is nilpotent of degree
at most $n$.

(3), (4): By definition we have $\MI_n( \phi ) = I - \MA_n( \phi )$.
Since $\MA_n( \phi )$ is nilpotent of degree at most $n$, we obtain by
Lemma~\ref{lem:matrix-nilpotent} that $\det \MI_n( \phi ) = 1$ and
\begin{equation} \label{eq:matrix-inverse}
  \MI_n( \phi )^{-1} = I + \MA_n( \phi ) + \cdots + \MA_n( \phi )^{n-1}.
\end{equation}
Recall that $\MA_n( \phi )^k \, \cap \, I = 0$, because $\phi_n$ has no fixed
points in $\RD_n$, and that powers of $\MA_n( \phi )$ are disjoint for different
exponents. Therefore $\MI_n( \phi )^{-1}$ is a binary matrix composed of binary
matrices given on the right-hand side of \eqref{eq:matrix-inverse}. Counting
entries of these matrices above, we conclude by using the estimate in
\eqref{eq:matrix-estim} that
\[
  \# \MI_n( \phi )^{-1} = n + \sum_{k=1}^{n-1} \# \MA_n( \phi )^k
    \leq \binom{n+1}{2}. \qedhere
\]
\end{proof}

\begin{proof}[Proof of Theorem~\ref{thm:matrix-part}]
By assumption $\phi_n \in \Phi$ has no cycle. The properties of $\pi$ are given
by Proposition~\ref{prop:jnk-part}.

(1), (2): By Proposition~\ref{prop:matrix-power} we have
\begin{equation} \label{eq:matrix-pow-en}
  \MA_n( \phi )^k = \sum_{( \phi^k_n(x), x ) \in \RD_n^2} E_n( \phi^k_n(x), x )
    \quad(k \geq 1).
\end{equation}
From Proposition~\ref{prop:jnk-part} and Corollary~\ref{corl:matrix-estim}
we infer that
\begin{equation} \label{eq:matrix-pow-part}
  \# \MA_n( \phi )^k = | J_{n,k}( \phi ) |
    = n - \sum_{\nu = 1}^k p_\nu
    \quad (1 \leq k \leq m).
\end{equation}
Since $| J_{n,m}( \phi ) | = 0$, the matrix $\MA_n( \phi )$ is nilpotent of
degree $m$.

(3): Since the nilpotent degree of $\MA_n( \phi )$ is $m \leq n$, we have by
Lemma~\ref{lem:matrix-nilpotent} that
\begin{equation} \label{eq:matrix-inverse-2}
  \MI_n( \phi )^{-1} = I + \MA_n( \phi ) + \cdots + \MA_n( \phi )^{m-1}.
\end{equation}
As already argued in \eqref{eq:matrix-inverse} and below, the matrices of the
right-hand side of \eqref{eq:matrix-inverse-2} are disjoint to each other.
Counting entries we obtain by means of \eqref{eq:matrix-pow-part} that
\begin{equation} \label{eq:matrix-inv-count-1}
  \# \MI_n( \phi )^{-1}
    = n + \sum_{k=1}^{m-1} \# \MA_n( \phi )^k
    = \sum_{k=0}^{m-1} \sum_{\nu = k+1}^m p_\nu
    = \sum_{\nu = 1}^m \nu p_\nu,
\end{equation}
using the fact that $\pi$ is a partition of $n$ and thus
\[
  n - \sum_{\nu = 1}^k p_\nu = \sum_{\nu = k+1}^m p_\nu
    \quad (0 \leq k < m).
\]
Alternatively, from \eqref{eq:matrix-inverse-2} and using \eqref{eq:matrix-pow-en},
we derive that
\begin{align}
  \MI_n( \phi )^{-1}
    &= I + \sum_{k=1}^{m-1}
      \sum_{( \phi^k_n(x), x ) \in \RD_n^2} E_n( \phi^k_n(x), x ) \nonumber \\
    &= I + \sum_{x \in \RD_n} \sum_{\nu = 1}^{h(x)-1} E_n( \phi^\nu_n(x), x ).
       \label{eq:matrix-inverse-3}
\end{align}
This implies that
\begin{equation} \label{eq:matrix-inv-count-2}
   \# \MI_n( \phi )^{-1} = \sum_{x \in \RD_n} h(x).
\end{equation}
Finally, combining \eqref{eq:matrix-inv-count-1} and \eqref{eq:matrix-inv-count-2},
we achieve by applying Lemma~\ref{lem:part-estim} that
\[
  \# \MI_n( \phi )^{-1}
    = \sum_{x \in \RD_n} h(x)
    = \sum_{\nu=1}^m \nu p_\nu \leq n m - \binom{m}{2}.
\]
It remains to show the structure of the inverse matrix $\MI_n( \phi )^{-1}$. Let
\[
  \MI_n( \phi )^{-1} = \Bigg( \Vect{v}_1, \ldots, \Vect{v}_n \Bigg).
\]
From \eqref{eq:matrix-inverse-3} we conclude for $j=x$ that
\[
  \Vect{v}_j = \Vect{e}_j + \sum_{\nu = 1}^{h(j)-1} \Vect{e}_{\phi_n^\nu(j)}
    \quad (1 \leq j \leq n). \qedhere
\]
\end{proof}

%%%%%%%%%%%%%%%%%%%%%%%%%%%%%%%%%%%%%%%%%%%%%%%%%%%%%%%%%%%%%%%%%%%%%%%%%%%%%%%%
% section
%%%%%%%%%%%%%%%%%%%%%%%%%%%%%%%%%%%%%%%%%%%%%%%%%%%%%%%%%%%%%%%%%%%%%%%%%%%%%%%%

\section{Simple patterns}
\label{sec:simple-patterns}

In this section we shall give some applications. Define an $n \times n$
sub-diagonal matrix as
\[
  D_{n,k} = \sum_{j=1}^{n-k} E_n( j+k,j )
    \quad (0 \leq k < n)
\]
and as a zero matrix otherwise.

\begin{Proposition} \label{prop:phi-x+1}
Let $\phi$ be induced by $f(x) = x + 1$. Then $\phi \in \Phi$ and has no cycle.
We have the following statements for $n \geq 2$:
\begin{enumerate}
\item The matrix $\MA_n( \phi )$ is nilpotent of degree $n$.
\item We have
\[
  \MA_n( \phi )^k = D_{n,k}, \quad
    \# \MA_n( \phi )^k = n-k
    \quad (1 \leq k \leq n).
\]
\item The inverse $\MI_n( \phi )^{-1}$ is a full lower triangular matrix whose
entire entries are $1$. Consequently, $\# \MI_n( \phi )^{-1} = \binom{n+1}{2}$.
\end{enumerate}
\end{Proposition}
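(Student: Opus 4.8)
The plan is to reduce everything to the structural theorems already proved. First I would record the shape of the local function: since $f(x) = x+1$ induces $\phi(x) = x+1$ on $\NN$ and $\phi(0) = 0$, the function $\phi$ clearly lies in $\Phi$, and the restriction $\phi_n$ acts as the single chain
\[
  1 \mapsto 2 \mapsto \cdots \mapsto n \mapsto 0,
\]
because $\phi_n(x) = x+1$ precisely when $x \leq n-1$ while $\phi_n(n) = 0$ (the pair $(n, n+1)$ fails to lie in $\RD_n^2$). In particular $\phi_n$ has no cycle, iterating gives $\phi_n^k(x) = x+k$ whenever $x+k \leq n$ and $\phi_n^k(x) = 0$ otherwise, and hence the height is $h(x) = n+1-x$ for $x \in \RD_n$.

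Next I would feed this into Theorem~\ref{thm:matrix-part}. From $h(x) = n+1-x$ we get $p_\nu = |\Set{ x \in \RD_n : h(x) = \nu }| = 1$ for each $\nu \in \Set{ 1, \ldots, n }$, so the associated composition is $\pi = (1, \ldots, 1)$ with $m = n$. Theorem~\ref{thm:matrix-part}(1) then immediately yields statement~(1), that $\MA_n(\phi)$ is nilpotent of degree $n$.

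For statement~(2), I would invoke Proposition~\ref{prop:matrix-power}, which gives $\MA_n(\phi)^k = \sum_{(\phi_n^k(x), x) \in \RD_n^2} E_n(\phi_n^k(x), x)$. Using $\phi_n^k(x) = x+k$ together with the constraints $x \geq 1$ and $x + k \leq n$, the index set is exactly $1 \leq x \leq n-k$, whence $\MA_n(\phi)^k = \sum_{x=1}^{n-k} E_n(x+k, x) = D_{n,k}$ and $\#\MA_n(\phi)^k = n-k$ for $1 \leq k \leq n$ (with $D_{n,n}$ the zero matrix). Alternatively one could verify $D_{n,1}^k = D_{n,k}$ by induction on $k$ using Lemma~\ref{lem:e-matrix}, since $E_n(j+k,j) E_n(i+1,i)$ survives only when $j = i+1$.

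Finally, for statement~(3), I would apply Theorem~\ref{thm:matrix-part}(3): the $j$-th column of $\MI_n(\phi)^{-1}$ is
\[
  \Vect{v}_j = \Vect{e}_j + \sum_{\nu=1}^{h(j)-1} \Vect{e}_{\phi_n^\nu(j)}
    = \Vect{e}_j + \Vect{e}_{j+1} + \cdots + \Vect{e}_n,
\]
since $h(j)-1 = n-j$ and $\phi_n^\nu(j) = j+\nu$ for $\nu \leq n-j$; equivalently, $\MI_n(\phi)^{-1} = \sum_{k=0}^{n-1} D_{n,k}$ by Theorem~\ref{thm:matrix-inverse}(4) combined with statement~(2) and $D_{n,0} = I$. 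Hence every entry on or below the diagonal equals $1$ and the rest vanish, so $\MI_n(\phi)^{-1}$ is the full lower triangular matrix of ones and $\#\MI_n(\phi)^{-1} = n + (n-1) + \cdots + 1 = \binom{n+1}{2}$. I expect no real obstacle; the only point needing care is the boundary value $\phi_n(n) = 0$, which is what forces $h(x) = n+1-x$ and thereby pins down the partition $\pi = (1, \ldots, 1)$.
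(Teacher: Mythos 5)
Your proof is correct and follows essentially the same route as the paper: compute $\phi_n^k(x)=x+k$ explicitly, identify $\MA_n(\phi)^k=D_{n,k}$ via the power formula, and read off the inverse as $\sum_{k=0}^{n-1}D_{n,k}$ from the structural theorems. Your extra step of computing the heights $h(x)=n+1-x$ and the composition $\pi=(1,\ldots,1)$ is a harmless elaboration the paper leaves implicit.
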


\begin{proof}
It is easily seen that $\phi^k(x) = x + k$ for $x \in \NN$. Therefore, $\phi$
cannot have a cycle or a fixed point in $\NN$, hence $\phi \in \Phi$. By
Theorem~\ref{thm:matrix-part} we then obtain that
\[
  \MA_n( \phi )^k
    = \sum_{( \phi^k_n(x), x ) \in \RD_n^2} E_n( \phi^k_n(x), x )
    = D_{n,k}
    \quad (1 \leq k \leq n),
\]
implying that $\# \MA_n( \phi )^k = n-k$. This shows that $\MA_n( \phi )$
is nilpotent of degree $n$. Theorem~\ref{thm:matrix-inverse} provides that
\[
  \MI_n( \phi )^{-1}
    = I + \sum_{k=1}^{n-1} \MA_n( \phi )^k
    = \sum_{k=0}^{n-1} D_{n,k},
\]
which is a full lower triangular matrix having only entries with $1$.
As a consequence, we have $\# \MI_n( \phi )^{-1} = \binom{n+1}{2}$.
\end{proof}

\begin{Remark}
If one chooses $f(x) = x + t$ with $t \in \ZZ \backslash \Set{0}$, where $f$ is
defined on the subdomain $\NN_{> |t|}$ in case $t < 0$, then one similarly
obtains the following shapes of the matrix $\MI_n( \phi )^{-1}$, where $\phi$
is induced by $f$:
\begin{enumerate}
\item Case $t=1$: $\MI_n( \phi )^{-1}$ is a full lower triangular matrix shown
by Proposition~\ref{prop:phi-x+1}.
\item Case $t>1$: $\MI_n( \phi )^{-1}$ is a lower triangular matrix with
\[
  \MI_n( \phi )^{-1} = \sum_{j=0}^{\lfloor \frac{n-1}{t} \rfloor} D_{n,jt}.
\]
For $t=2$ this gives a checkerboard pattern.
\item Case $t=-1$: $\MI_n( \phi )^{-1}$ is a full upper triangular matrix.
\item Case $t<-1$: $\MI_n( \phi )^{-1}$ is an upper triangular matrix,
      which equals the transposed matrix of the case $|t| > 1$.
\end{enumerate}
The cases $t \neq 1$ will be left to the reader. See
Figure~\ref{fig:matrix-x-5} for an example.
\end{Remark}

\begin{minipage}{0.95\textwidth}
\captionsetup{type=figure}
\captionof{figure}{Matrices $\MI_n( \phi )$ and $\MI_n( \phi )^{-1}$}
\label{fig:matrix-x-5}
\begin{center}
  \includegraphics[width=5.4cm]{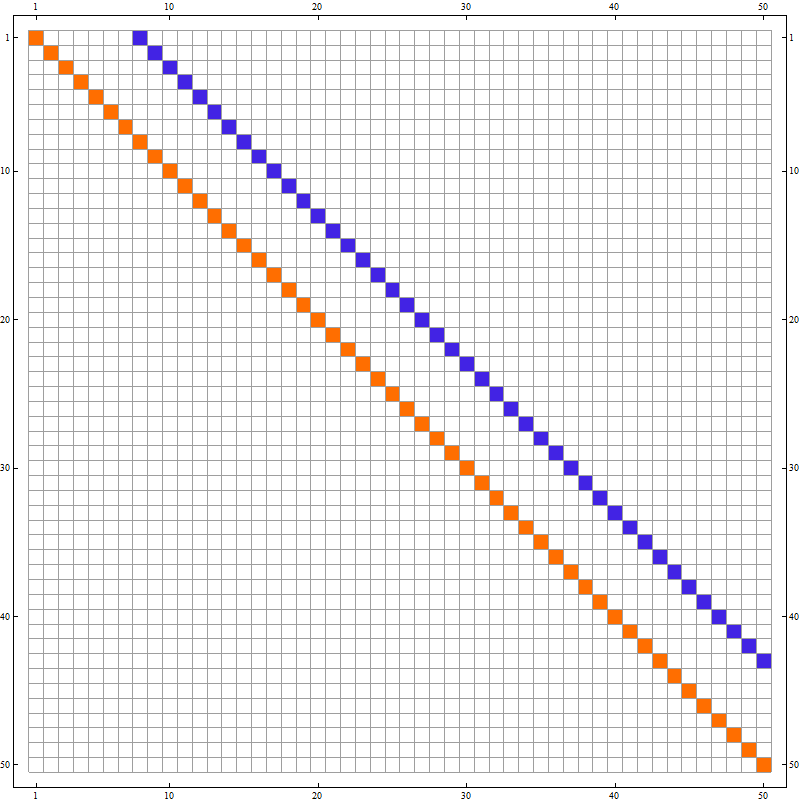}
  \includegraphics[width=5.4cm]{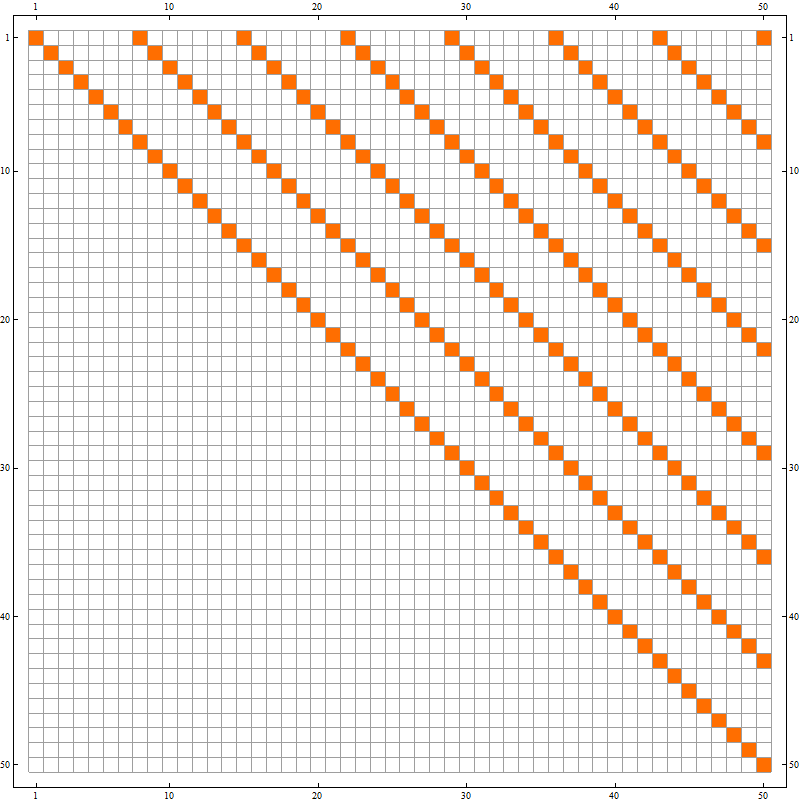}
\end{center}
\begin{center} \small
  Case $n=50$: $\phi$ is induced by $f(x)=x-7$.\\
  Red entries: $1$, blue entries: $-1$.
\end{center}
\end{minipage}
\bigskip

For a second example, let $\PP = \Set{q_1,q_2,\ldots}$ be the set of primes and
$\Pi(x)$  be the prime-counting function. Define
\[
  \omega: \PP \to \PP, \quad \omega( q_j ) = q_{j+1}
\]
giving the next prime.

\begin{Proposition}
Let $\phi$ be induced by $\omega$. Then $\phi \in \Phi$ and has no cycle.
We have the following statements for $n \geq 2$:
\begin{enumerate}
\item The matrix $\MA_n( \phi )$ is nilpotent of degree $\Pi(n)$.
\item We have
\[
  \MA_n( \phi )^k
    = \sum_{j=1}^{\Pi(n)-k} E_n( q_{j+k}, q_j ),
    \quad \# \MA_n( \phi )^k = \Pi(n)-k,
    \quad (1 \leq k \leq \Pi(n)).
\]
\item We have
\[
  \MI_n( \phi )^{-1} = I + \sum_{1 \leq j < i \leq \Pi(n) } E_n( q_i, q_j ),
    \quad \# \MI_n( \phi )^{-1} = n + \binom{\Pi(n)}{2}.
\]
\end{enumerate}
\end{Proposition}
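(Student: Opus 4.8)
The plan is to mirror the proof of Proposition~\ref{prop:phi-x+1}, replacing the shift $x \mapsto x+1$ on $\NN$ by the shift $q_j \mapsto q_{j+1}$ along the primes. First I would check that $\phi \in \Phi$ and that $\phi$, hence $\phi_n$, has no cycle. Since $\omega$ is defined on all of $\PP$, the induced function satisfies $\phi(q_j) = q_{j+1}$ and, more generally, $\phi^k(q_j) = q_{j+k}$ for all $k \geq 1$, while $\phi(x) = 0$ for $x \in \NN \backslash \PP$. Because $q_{j+k} > q_j$, no orbit can return to its starting point, so $\phi$ has neither a fixed point nor a cycle; thus $\phi \in \Phi$, and a cycle of $\phi_n$ would be a cycle of $\phi$ (as $\phi_n$ agrees with $\phi$ along such a cycle), which is impossible. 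Hence Theorems~\ref{thm:matrix-inverse} and \ref{thm:matrix-part} apply to $\phi_n$.

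Next I would describe $\phi_n$ explicitly and compute all heights. From the definition of the local function, $\phi_n(q_j) = q_{j+1}$ exactly when $q_{j+1} \leq n$, i.e.\ $j \leq \Pi(n) - 1$, whereas $\phi_n(q_{\Pi(n)}) = 0$ and $\phi_n(x) = 0$ for every $x \in \RD_n \backslash \PP$. Iterating gives $\phi_n^k(q_j) = q_{j+k}$ as long as $j + k \leq \Pi(n)$, and the iteration stops one step afterwards, so
\[
  h(q_j) = \Pi(n) - j + 1 \quad (1 \leq j \leq \Pi(n)),
  \qquad h(x) = 1 \quad (x \in \RD_n \backslash \PP).
\]
In particular the maximal height is $h(q_1) = \Pi(n)$, so in the notation of Theorem~\ref{thm:matrix-part} we have $m = \Pi(n)$ (the partition is $\pi = (n-\Pi(n)+1,1,\ldots,1)$ of length $\Pi(n)$), which yields statement~(1).

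For statement~(2) I would substitute these iterates into Proposition~\ref{prop:matrix-power}: the pairs $(x,\phi_n^k(x)) \in \RD_n^2$ are precisely $(q_j, q_{j+k})$ with $1 \leq j \leq \Pi(n) - k$, whence $\MA_n(\phi)^k = \sum_{j=1}^{\Pi(n)-k} E_n(q_{j+k}, q_j)$ and $\# \MA_n(\phi)^k = \Pi(n) - k$ (in agreement with $|J_{n,k}(\phi)|$ from Proposition~\ref{prop:jnk-part}). For statement~(3), Theorem~\ref{thm:matrix-inverse} gives $\MI_n(\phi)^{-1} = I + \sum_{k=1}^{\Pi(n)-1} \MA_n(\phi)^k$ (the higher powers vanish); inserting the formula just obtained and reindexing by $i = j + k$ collapses the double sum to $\sum_{1 \leq j < i \leq \Pi(n)} E_n(q_i, q_j)$. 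Since these $\binom{\Pi(n)}{2}$ positions are pairwise distinct and off-diagonal ($q_i \neq q_j$), they are disjoint from $I$, so $\# \MI_n(\phi)^{-1} = n + \binom{\Pi(n)}{2}$; alternatively this count follows from $\sum_{x \in \RD_n} h(x) = \binom{\Pi(n)+1}{2} + (n - \Pi(n))$.

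There is no genuine obstacle: the whole argument is a routine transcription of Proposition~\ref{prop:phi-x+1} through the bijection $j \leftrightarrow q_j$ between $\Set{1, \ldots, \Pi(n)}$ and the primes in $\RD_n$. The only thing to watch is the index bookkeeping --- the nilpotency degree, the truncation thresholds in the powers of $\MA_n(\phi)$, and the reindexing in~(3) are all governed by $\Pi(n)$ rather than $n$ --- and one must not forget the $n - \Pi(n)$ non-prime elements of $\RD_n$, each a leaf of height $1$, when counting entries.
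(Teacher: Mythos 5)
Your proposal is correct and follows essentially the same route as the paper: verify $\phi\in\Phi$ has no cycle via $q_{j+k}>q_j$, compute the powers $\MA_n(\phi)^k=\sum_{j=1}^{\Pi(n)-k}E_n(q_{j+k},q_j)$ from Proposition~\ref{prop:matrix-power}, and then obtain the inverse and its entry count from Theorems~\ref{thm:matrix-inverse} and \ref{thm:matrix-part} by reindexing the double sum. Your explicit computation of the heights $h(q_j)=\Pi(n)-j+1$ and the partition $\pi=(n-\Pi(n)+1,1,\ldots,1)$ is a harmless elaboration the paper leaves implicit, and your index bookkeeping checks out.
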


\begin{proof}
Since $\omega^k(q_j)= q_{j+k} > q_j$ for $q_j \in \PP$ and $j, k \geq 1$, the
induced function $\phi$ cannot have a cycle or a fixed point in $\NN$. Thus, we
have $\phi \in \Phi$. By construction, $\phi(x) = 0$ for $x \in \NN_0
\backslash \PP$. Let $N = \Pi(n)$, then $\RD_n \cap \PP = \Set{q_1, \ldots, q_N}$.
Using these properties and Theorem~\ref{thm:matrix-part} we infer that
\[
  \MA_n( \phi )^k
    = \sum_{( \phi^k_n(x), x ) \in \RD_n^2} E_n( \phi^k_n(x), x )
    = \sum_{j=1}^{N-k} E_n( q_{j+k}, q_j )
    \quad (1 \leq k \leq N).
\]
As a result, $\# \MA_n( \phi )^k = N-k$ for $1 \leq k \leq N$, implying that
$\MA_n( \phi )$ is nilpotent of degree $N$. The last part follows by
Theorem~\ref{thm:matrix-inverse} and reordering the above sums that
\[
  \MI_n( \phi )^{-1} = I + \sum_{k=1}^{N-1} \MA_n( \phi )^k
    = I + \sum_{k=1}^{N-1} \sum_{j=1}^{N-k} E_n( q_{j+k}, q_j )
    = I + \sum_{1 \leq j < i \leq N } E_n( q_i, q_j ).
\]
Counting entries of $\MI_n( \phi )^{-1}$ in the equation above,
we finally obtain that
\[
  \# \MI_n( \phi )^{-1} = n + \sum_{k=1}^{N-1} ( N-k ) = n + \binom{N}{2}.
    \qedhere
\]
\end{proof}

\begin{minipage}{0.95\textwidth}
\captionsetup{type=figure}
\captionof{figure}{Matrices $\MI_n( \phi )$ and $\MI_n( \phi )^{-1}$}
\label{fig:matrix-primes}
\begin{center}
  \includegraphics[width=5.4cm]{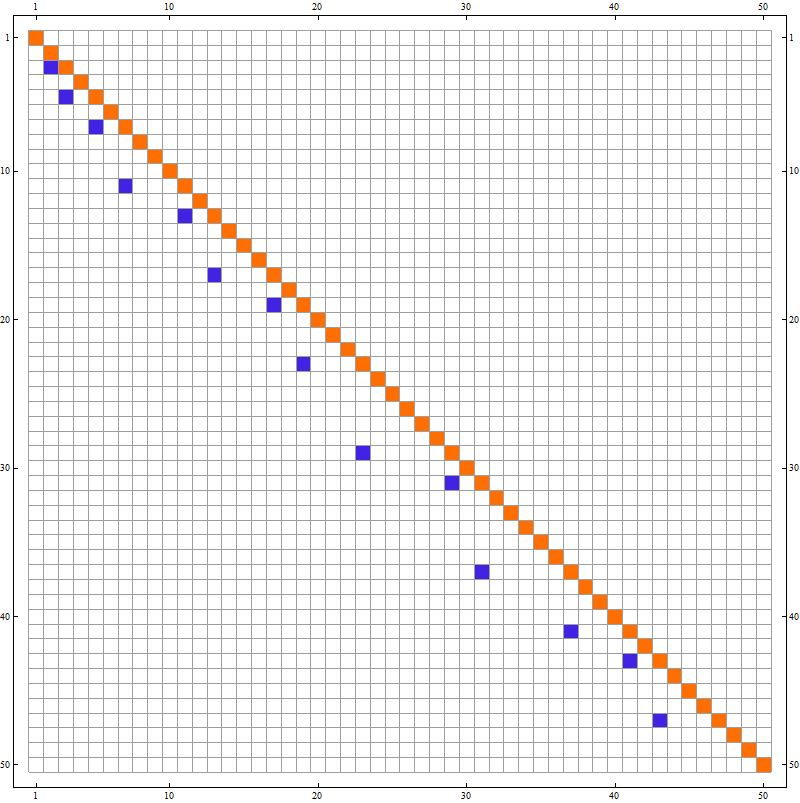}
  \includegraphics[width=5.4cm]{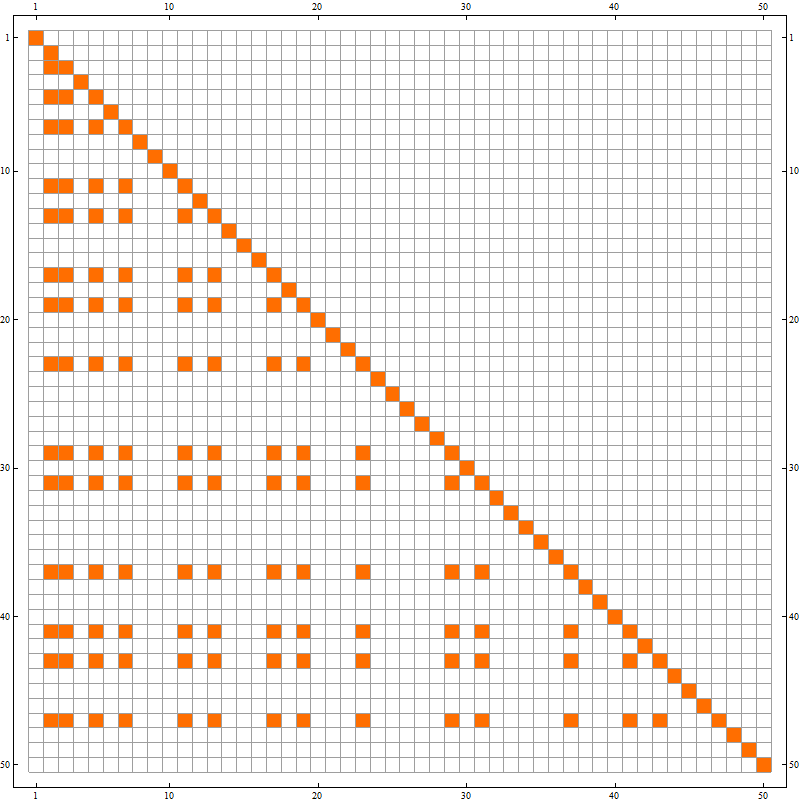}
\end{center}
\begin{center} \small
  Case $n=50$: $\phi$ is induced by $\omega$.\\
  Red entries: $1$, blue entries: $-1$.
\end{center}
\end{minipage}

%%%%%%%%%%%%%%%%%%%%%%%%%%%%%%%%%%%%%%%%%%%%%%%%%%%%%%%%%%%%%%%%%%%%%%%%%%%%%%%%
% section
%%%%%%%%%%%%%%%%%%%%%%%%%%%%%%%%%%%%%%%%%%%%%%%%%%%%%%%%%%%%%%%%%%%%%%%%%%%%%%%%

\section{\texorpdfstring{The $3x+1$ problem}{The 3x+1 problem}}
\label{sec:3x+1-problem}

A variant of the Collatz function may be defined by
\[
  c(x) = \begin{cases}
    x/2,      & \text{if $x$ is even}, \\
    (3x+1)/2, & \text{if $x$ is odd}.
  \end{cases}
\]
The behavior of the iterations of this function is known as the $3x+1$ problem.
It is still an open problem to decide, whether a sequence of iterations
$\mathcal{I}_c(x) = (c^k(x))_{k \geq 1}$, starting from a positive integer $x$,
eventually returns to $1$ entering a trivial cycle \Set{1,2} afterwards. It is
conjectured that all such iterations eventually return to $1$. For a wide
survey of the $3x+1$ problem see Lagarias \cite{Lagarias:2010}.
\smallskip

There are three possible cases of the behavior of a sequence $\mathcal{I}_c(x)$:
\begin{enumerate}
\item It eventually enters the trivial cycle \Set{1,2}.
\item It eventually enters a cycle other than \Set{1,2}.
\item It is unbounded.
\end{enumerate}
\smallskip

We can establish a connection between the cases (1), (2), and the given theory
in the former sections. To get rid of the trivial cycle \Set{1,2}, we define
\[
  \phi_c(x) = \begin{cases}
    0,         & \text{if $x \leq 2$}, \\
    x/2,       & \text{if $x > 2$ is even}, \\
    (3x+1)/2,  & \text{if $x > 2$ is odd}. \\
  \end{cases}
\]
Then we have $\phi_c \in \Phi$. As a result of Corollary~\ref{corl:matrix-cycle},
if $\phi_c$ has a cycle, then there exists an integer $N \geq 2$ such that
\[
  \det \MI_n( \phi_c ) = 0 \quad (n \geq N).
\]

See Figure~\ref{fig:matrix-3x+1} for the simple shape of $\MI_n( \phi_c )$
and the complicated shape of its inverse in the case $n=50$.
Regarding the local function $\phi_{c,n}$ for this case, we can compute the
following parameters using Theorem~\ref{thm:matrix-part}:
\[
  \pi = ( 10, 4, 3, 3, 3, 3, 4, 2, 2, 2, 2, 2, 3, 2, 2, 1, 1, 1 ),
    \quad m = 18.
\]
Consequently, the nilpotent degree of $\MA_n( \phi_c )$ is $18$ and
\[
  \# \MI_n( \phi_c )^{-1} = \sum_{\nu=1}^m \nu p_\nu = 348.
\]

\begin{Remark} \label{rem:3x+1}
Zeilberger \cite{Zeilberger:2014} asked for an evaluation of determinants of
certain $2d \times 2d$ matrices $M(d)$ occurring in an enumeration problem.
Actually, this was intended as a semi-joke \cite{Zeilberger:2014b}, because
these matrices were disguised intentionally, hiding their close relationship to
the $3x+1$ problem at first glance.

Chapman~\cite{Chapman:2014} pointed out, that one obtains, after swapping
columns of $M(d)$, that
\[
  \det M(d) = (-1)^d \det(I - N(d)),
\]
where $N(d)$ is the matrix describing the local iteration of
\[
  \psi(x) = \begin{cases}
    (x-1)/2, & \text{if $x$ is odd}, \\
    3x/2+1,  & \text{if $x$ is even}.
  \end{cases}
\]
Chapman showed that either $N(d)$ is nilpotent and $\det M(d) = (-1)^d$ or
$N(d)$ has an eigenvector with eigenvalue 1 and $\det M(d) = 0$, induced by a
cycle of $\psi$. Furthermore, he established the connection to the $3x+1$
problem via $c(x) = \psi(x-1)+1$.
\end{Remark}

\begin{minipage}{0.95\textwidth}
\captionsetup{type=figure}
\captionof{figure}{Matrices $\MI_n( \phi_c )$ and $\MI_n( \phi_c )^{-1}$}
\label{fig:matrix-3x+1}
\begin{center}
  \includegraphics[width=5.4cm]{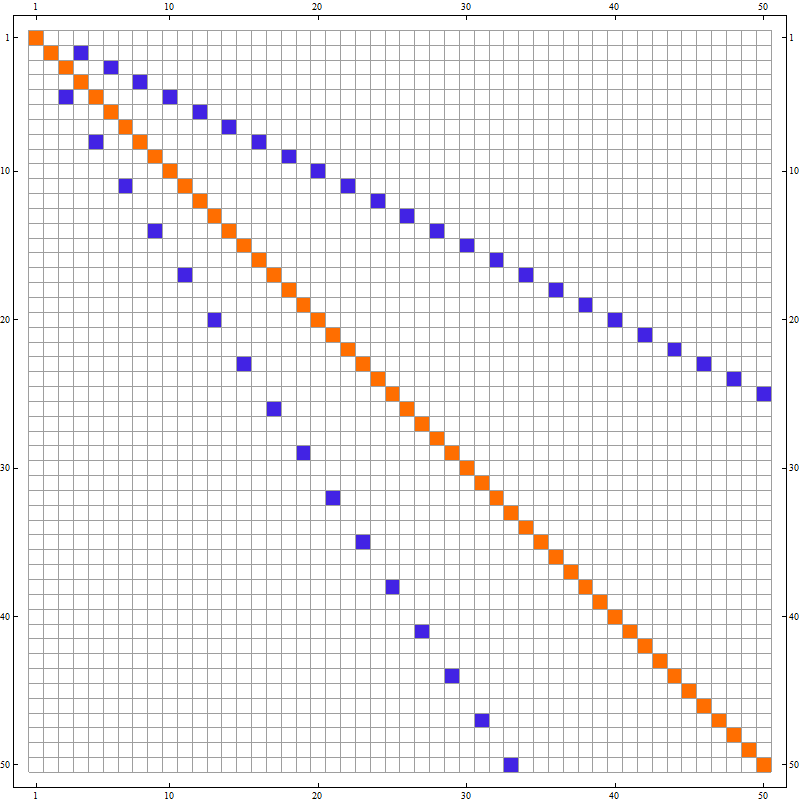}
  \includegraphics[width=5.4cm]{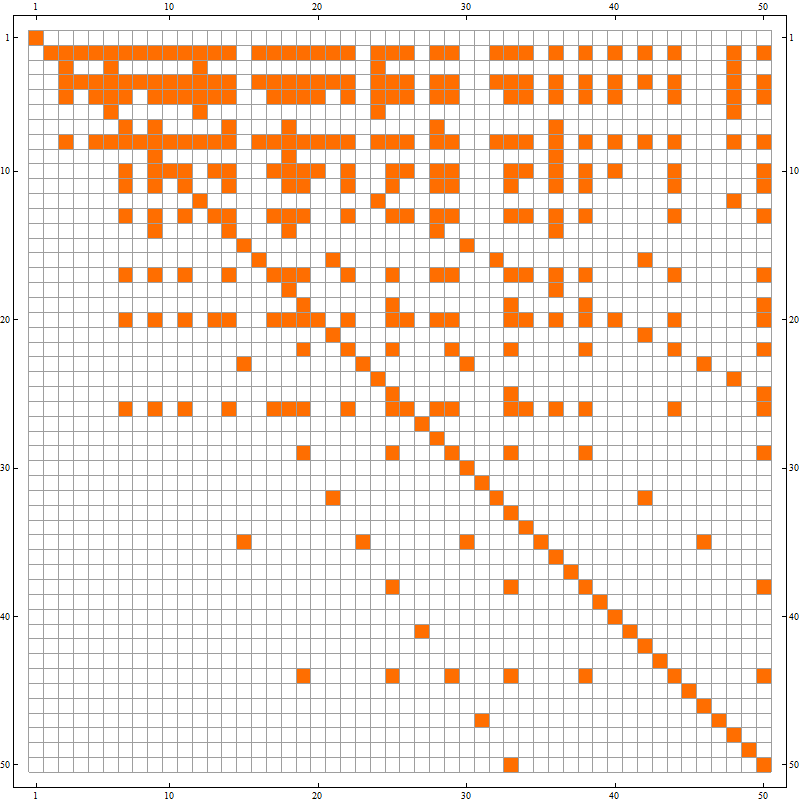}
\end{center}
\begin{center} \small
  Case $n=50$. Red entries: $1$, blue entries: $-1$.
\end{center}
\end{minipage}
\bigskip

As a last example, we consider a more complicated function with $r=3$ branches:
\[
  \phi_r(x) = \begin{cases}
    0,         & \text{if $x \leq 1$}, \\
    x/3,       & \text{if $x > 1$ and $x \equiv 0 \!\!\! \pmod{3}$}, \\
    (2x+1)/3,  & \text{if $x > 1$ and $x \equiv 1 \!\!\! \pmod{3}$}, \\
    (5x-1)/3,  & \text{if $x > 1$ and $x \equiv 2 \!\!\! \pmod{3}$}. \\
  \end{cases}
\]
Such functions are called \textsl{generalized Collatz} functions or
\textsl{residue-class-wise affine} functions, which can be defined for any
$r \geq 2$, $r$ being the number of branches, respectively, residue classes
(cf.~\cite[(4.1), p.~12]{Lagarias:2010}).

The modification here, that $\phi_r(1) = 0$, is only to prevent a fixed point
at $x=1$. In this way, we have $\phi_r \in \Phi$. Again, we compute the
parameters of the local function $\phi_{r,n}$ for $n=50$:
\[
  \pi = ( 8, 2, 5, 5, 6, 4, 5, 8, 4, 2, 1 ), \quad m = 11.
\]
Consequently, the nilpotent degree of $\MA_n( \phi_r )$ is $11$ and
\[
  \# \MI_n( \phi_r )^{-1} = \sum_{\nu=1}^m \nu p_\nu = 267.
\]

\begin{minipage}{0.95\textwidth}
\captionsetup{type=figure}
\captionof{figure}{Matrices $\MI_n( \phi_r )$ and $\MI_n( \phi_r )^{-1}$}
\label{fig:matrix-phi-3}
\begin{center}
  \includegraphics[width=5.4cm]{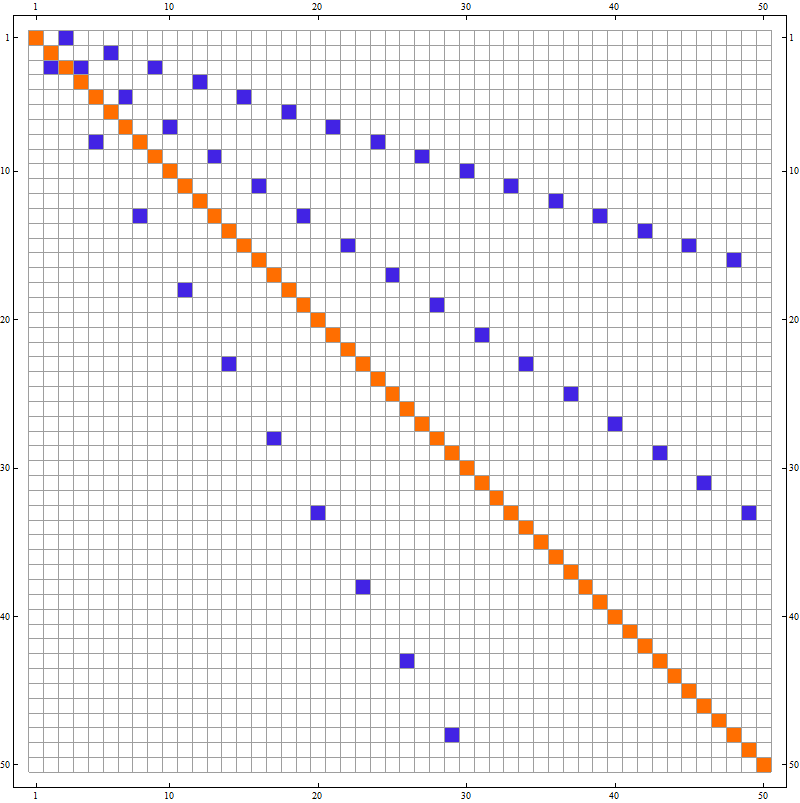}
  \includegraphics[width=5.4cm]{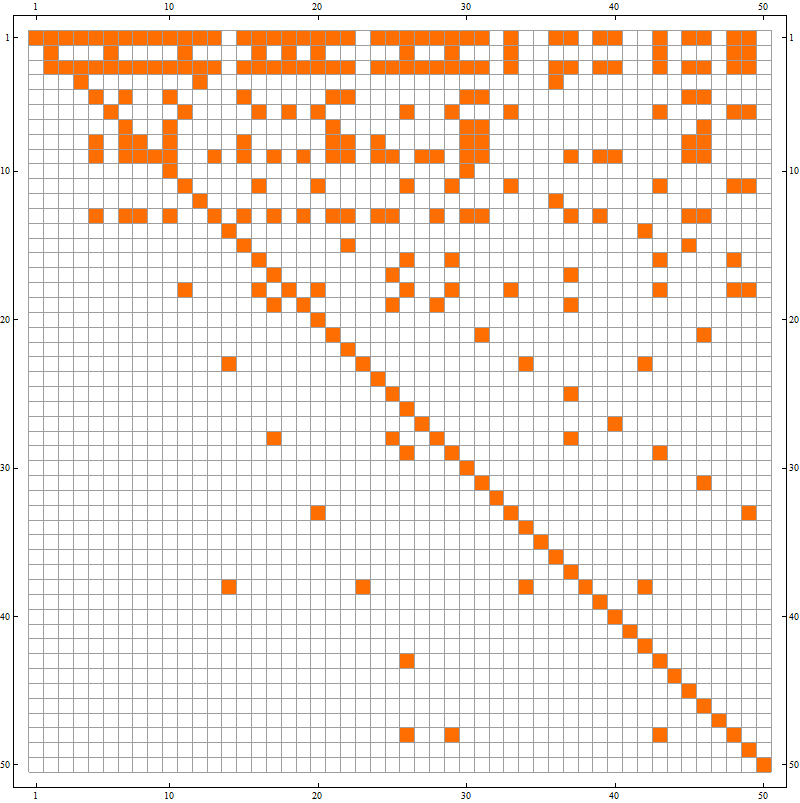}
\end{center}
\begin{center} \small
  Case $n=50$. Red entries: $1$, blue entries: $-1$.
\end{center}
\end{minipage}
\bigskip

The $3x+1$ problem, as treated in Remark~\ref{rem:3x+1}, was the starting point
for the author to give a general theory here. All computations were performed
using \textsl{Mathematica}.

\section*{Acknowledgments}

We would like to thank Pieter Moree for helpful comments.

%%%%%%%%%%%%%%%%%%%%%%%%%%%%%%%%%%%%%%%%%%%%%%%%%%%%%%%%%%%%%%%%%%%%%%%%%%%%%%%%
% bibliography
%%%%%%%%%%%%%%%%%%%%%%%%%%%%%%%%%%%%%%%%%%%%%%%%%%%%%%%%%%%%%%%%%%%%%%%%%%%%%%%%

\bibliographystyle{amsplain}

\bigskip

\end{document}